\def\NZQ{\mathbb}               
\def\NN{{\NZQ N}}
\def\ZZ{{\NZQ Z}}
\def\RR{{\NZQ R}}
\def\CC{{\NZQ C}}
\def\PP{{\NZQ P}}
\newtheorem{Theorem}{Theorem}[section]
\newtheorem{Lemma}[Theorem]{Lemma}
\newtheorem{Corollary}[Theorem]{Corollary}
\let\epsilon\varepsilon
\let\phi=\varphi
\let\kappa=\varkappa
\begin{document}
\title{Volumes on Complex Analytic Spaces}
\author{Steven Dale Cutkosky }
\thanks{Partially supported by NSF}

\address{Steven Dale Cutkosky, Department of Mathematics,
University of Missouri, Columbia, MO 65211, USA}
\email{cutkoskys@missouri.edu}


\maketitle

\section{Introduction} In this paper we show that volumes and related limits exist for line bundles and graded linear series on
compact reduced complex analytic spaces. In Section \ref{SecProp} we define graded linear series associated to a line bundle
and the Kodaira-Iitaka dimension of a graded linear series. Our definition of Kodaira-Iitaka dimension coincides with the classical definitions in \cite{I}, \cite{U} on a compact, normal complex analytic variety.  We restrict to  reduced compact complex analytic spaces since limits of graded linear series do not generally exist on nonreduced projective algebraic varieties, and hence cannot exist in general on compact complex analytic spaces. This  is shown in 
\cite{C2} and \cite{C4}. We now state our main results in this paper.

Suppose that $L$ is a graded linear series on a compact irreducible reduced complex space $X$.
We will call such a space a (compact) complex analytic variety.

The {\it index} $m=m(L)$ of $L$ is defined as the index of groups
$$
m=[\ZZ:G]
$$
where $G$ is the subgroup of $\ZZ$ generated by $\{n\mid L_n\ne 0\}$.

\begin{Theorem}\label{Theorem5*}  Suppose that $X$ is a $d$-dimensional compact complex analytic variety, and $L$ is a graded linear series on $X$ with Kodaira-Iitaka dimension  $\kappa=\kappa(L)\ge 0$. Let $m=m(L)$ be the index of $L$.  Then  
$$
\lim_{n\rightarrow \infty}\frac{\dim_{\CC} L_{nm}}{n^{\kappa}}
$$
exists. 
\end{Theorem}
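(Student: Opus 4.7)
My plan is to reduce the general statement to the top-dimensional volume case, namely the existence of $\lim\dim M_n/n^{\dim W}$ for a big graded linear series $M$ on a compact complex analytic variety $W$ of dimension $\kappa$; this top-dimensional existence is, I expect, to be established as a separate (and technically harder) theorem elsewhere in the paper via an Okounkov body construction adapted to the analytic setting.

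First, replacing $L$ by its $m$th Veronese $L'$ defined by $L'_n=L_{nm}$, I may assume $m=1$, since this operation preserves $\kappa$ and transforms the desired limit into $\lim\dim L_n/n^{\kappa}$. By the definition of $\kappa(L)$ there is some $n_0$ such that the meromorphic map $\phi_{n_0}\colon X\dashrightarrow \PP(L_{n_0}^\vee)$ determined by $L_{n_0}$ has image of dimension $\kappa$. Applying resolution of indeterminacy for meromorphic maps of compact complex analytic spaces (Hironaka, Bierstone--Milman), I pass to a bimeromorphic modification $\pi\colon X'\to X$ with $X'$ smooth together with a morphism $\psi\colon X'\to Z\subset\PP(L_{n_0}^\vee)$ resolving $\phi_{n_0}\circ\pi$, where $Z$ is a reduced compact complex analytic variety of dimension $\kappa$. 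Taking the Stein factorization $\psi=g\circ f$ with $f\colon X'\to W$ having connected fibres, I obtain a compact complex analytic variety $W$ of dimension $\kappa$ with $f_*\mathcal{O}_{X'}=\mathcal{O}_W$.

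Since $\pi$ is bimeromorphic and $X$ is reduced, $\dim_{\CC} L_n=\dim_{\CC}\pi^{*}L_n$ for every $n$. For $n$ divisible by $n_0$, the sections of $\pi^{*}L_n$ vanish on a common fixed divisor coming from $\pi^{*}L_{n_0}$, and modulo that fixed divisor they descend, via $f_*\mathcal{O}_{X'}=\mathcal{O}_W$, to sections of a graded linear series $M$ on $W$. The choice of $n_0$ guarantees that $\kappa(M)=\dim W$, so $M$ is big on $W$, and the comparison $\dim_{\CC}L_n=\dim_{\CC}M_{n/n_0}+O(n^{\kappa-1})$ (which will have to be verified) reduces the theorem to the existence of the top-dimensional volume limit for $M$ on $W$.

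The main obstacle is the top-dimensional statement itself, i.e.\ the existence of $\lim\dim M_n/n^{\dim W}$ for a big graded linear series on a compact complex analytic variety. In the projective algebraic setting this is the Okounkov body theorem of Lazarsfeld--Mustata / Kaveh--Khovanskii; transferring it to the complex analytic category requires constructing an appropriate $\ZZ^{\dim W}$-valued valuation on the field of meromorphic functions of $W$, built from a regular system of analytic coordinates at a smooth point of $W$, and then checking that the semigroup attached to $M$ satisfies the convex-body hypotheses. A secondary obstacle is the careful bookkeeping needed to show that the contributions from the fixed divisor produced by resolution, and from the fact that the Stein factor $g$ is finite rather than birational, are indeed $O(n^{\kappa-1})$ and hence do not affect the limit.
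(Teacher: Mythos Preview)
Your strategy is genuinely different from the paper's, and it has a real gap.

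\textbf{What the paper does.} The paper does \emph{not} reduce to a big linear series on a $\kappa$-dimensional target. It builds the Okounkov semigroup directly on the $d$-dimensional $X$: pick a smooth point $Q\in X$, a regular system of parameters in the regular local ring $\mathcal O_{X,Q}$, and the associated rank-$d$ lex valuation $\nu$ on $Q(\mathcal O_{X,Q})$. This gives $S(L)\subset\ZZ^d\times\NN$ with $\#S(L)_n=\dim_{\CC}L_n$. The bound $\dim_{\CC}L_n\le \gamma n^{a(X)}\le \gamma n^d$ (Lemma~\ref{LemmaKI} together with $a(X)\le d$) forces $S(L)$ to be strongly nonnegative, so Kaveh--Khovanskii's cone theorem gives existence of $\lim \dim_{\CC}L_{nm}/n^{q}$ with $q=q(S(L))$. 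The remaining point is the equality $q=\kappa$, and this is handled \emph{algebraically}: for large $p$ the finitely generated subalgebra $L^{[pm]}=\CC[L_{pm}]$ has Hilbert polynomial of degree $\kappa(L^{[pm]})=\kappa(L)$, while the same cone argument applied to $L^{[pm]}$ identifies that degree with $q(S(L^{[pm]}))$, and one checks $q(S(L^{[pm]}))=q(S(L))$ for $p\gg 0$ by comparing Newton--Okounkov bodies. No Iitaka fibration, no Stein factorization, no $O(n^{\kappa-1})$ bookkeeping.

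\textbf{Where your reduction breaks.} For an arbitrary graded linear series the descent step is not valid as you state it. You choose $n_0$ so that $\phi_{n_0}$ has $\kappa$-dimensional image, resolve to get $\pi^*\mathcal L^{n_0}\cong\mathcal O_{X'}(F)\otimes\psi^*\mathcal O_Z(1)$, and then assert that for $n$ divisible by $n_0$ the elements of $\pi^*L_n\subset\Gamma(X',\mathcal O_{X'}(\tfrac{n}{n_0}F)\otimes\psi^*\mathcal O_Z(\tfrac{n}{n_0}))$ vanish along $\tfrac{n}{n_0}F$ and hence come from $W$. But only the subspace $\pi^*(L_{n_0}^{\,n/n_0})$ is guaranteed to vanish on $\tfrac{n}{n_0}F$; since $L$ is merely a graded subalgebra, $L_n$ can be strictly larger than $L_{n_0}^{\,n/n_0}$, and those extra sections need not vanish on $F$ at all, let alone to order $n/n_0$. (Already on $\PP^2$ with $\mathcal L=\mathcal O(1)$, taking $L_{n_0}$ with a basepoint and $L_{2n_0}$ complete produces sections not vanishing on $F$.) Consequently there is no map $\pi^*L_n\to\Gamma(W,\cdot)$ of the kind you need, and the asserted comparison $\dim_{\CC}L_n=\dim_{\CC}M_{n/n_0}+O(n^{\kappa-1})$ has no justification; the discrepancy is governed by cohomology on the support of $F$, which is $(d-1)$-dimensional, not $(\kappa-1)$-dimensional. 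A secondary issue is that after your Veronese reduction to $m=1$ you still only treat $n$ divisible by $n_0$, whereas the theorem requires the full limit. Your identification of the ``main obstacle'' is accurate---the analytic Okounkov-body input is exactly what the paper supplies---but the paper applies it on $X$ itself and sidesteps the Iitaka-type reduction entirely.
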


In particular, from the definition of the index, we have that the limit
$$
\lim_{n\rightarrow \infty}\frac{\dim_{\CC} L_{n}}{{n}^{\kappa}}
$$
exists, whenever $n$ is constrained to lie in an arithmetic sequence $a+bm$ ($m=m(L)$ and $a$ an arbitrary but fixed constant), as $\dim_kL_n=0$ if $m\not\,\mid n$.

An example of a big line bundle where the limit in Theorem \ref{Theorem5*} is an irrational number is given in Example 4 of Section 7 \cite{CS}.

Theorem \ref{Theorem5*} (and the following Corollary \ref{Theorem12*}) have been proven for proper algebraic varieties in a series of papers. In this algebraic setting, it has been proven 
by  Okounkov  \cite{Ok} for section rings of ample line bundles,  Lazarsfeld and Mustata \cite{LM} for section rings of big line bundles, and for graded linear series by Kaveh and Khovanskii \cite{KK}. All of these proofs require the assumption that the ground field is  algebraically closed. The theorem has been proven by the author over a perfect field in \cite{C2}, and over an arbitrary field in \cite{C4}.

In the analytic case, it has been proven by Bouksom \cite{B} when $X$ is a compact K\"ahler manifold and $\mathcal L$ is nef and big.

\begin{Theorem}\label{Theorem100*} Suppose that $X$ is a $d$-dimensional compact complex analytic variety and $L$ is a graded linear series on $X$ with Kodaira-Iitaka dimension $\kappa=\kappa(L)\ge 0$. Let $m=m(L)$ be the index of $L$. 
Let 
$$
Y_{nm}=\mbox{Proj}(\CC[L_{nm}t])\subset \PP_{\CC}^{\dim_{\CC}L_{nm}-1}
$$
be the projective subvariety of $\PP^{\dim_{\CC}L_{nm}-1}$, where $t$ is an indeterminate and $\CC[L_{nm}t]$ is the graded $\CC$-algebra where $t$ has degree 1.
 Let $\deg(Y_{nm})$ be the degree of $Y_{nm}$ in $\PP^{\dim_{\CC}L_{nm}-1}$.
Then  $\dim Y_{nm}=\kappa$ for $n\gg 0$ and 
$$
\lim_{n\rightarrow \infty}\frac{\dim_{\CC} L_{nm}}{n^{\kappa}}=\lim_{n\rightarrow\infty}\frac{\deg(Y_{nm})}{\kappa!n^{\kappa}}.
$$
\end{Theorem}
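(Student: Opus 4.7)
The plan is to combine the Hilbert polynomial of the standard graded algebra $\CC[L_{nm}t]$ with Theorem \ref{Theorem5*} and a valuation-theoretic (Newton--Okounkov body) comparison, obtaining both an upper and a lower bound for $\deg(Y_{nm})/(\kappa!\,n^{\kappa})$. The first assertion, $\dim Y_{nm}=\kappa$ for $n\gg 0$, should be immediate from the definition of the Kodaira--Iitaka dimension recorded in Section \ref{SecProp}: the rational map defined by a basis of $L_{nm}$ has image whose dimension stabilizes at $\kappa$ for $n$ large, and the closure of that image is exactly $Y_{nm}$.

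For such $n$, $\CC[L_{nm}t]$ is a finitely generated graded $\CC$-algebra generated in degree one whose Proj has dimension $\kappa$, so by Hilbert polynomial theory
$$
\lim_{k\to\infty}\frac{\dim_{\CC}(\CC[L_{nm}t])_k}{k^{\kappa}}=\frac{\deg(Y_{nm})}{\kappa!}.
$$
Multiplication in the ambient section ring identifies $(\CC[L_{nm}t])_k$ with a subspace of $L_{nmk}$, so setting $A=\lim_{n}\dim_{\CC} L_{nm}/n^{\kappa}$ (the limit supplied by Theorem \ref{Theorem5*}),
$$
\frac{\deg(Y_{nm})}{\kappa!}\;\leq\;\lim_{k\to\infty}\frac{\dim_{\CC} L_{nmk}}{k^{\kappa}}\;=\;n^{\kappa}A,
$$
which yields the upper bound $\deg(Y_{nm})/(\kappa!\,n^{\kappa})\leq A$ uniformly in $n$.

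For the matching lower bound I would import the valuation setup used to prove Theorem \ref{Theorem5*}: fix a $\ZZ^{\kappa}$-valued valuation $\nu$ on the function field of $X$ (coming from a flag through a regular point of $X$ or of a resolution) for which $\dim_{\CC} V=|\nu(V\setminus\{0\})|$ on every finite-dimensional space of sections, and let $\Delta(L)$ denote the associated bounded Okounkov body of $L$. Set $\Sigma_n=\mathrm{conv}(\nu(L_{nm}\setminus\{0\}))\subset\RR^{\kappa}$; Khovanskii's theorem applied to the finitely generated algebra $\CC[L_{nm}t]$ identifies $\Sigma_n$ as its Newton--Okounkov body, so $\mathrm{vol}_{\kappa}(\Sigma_n)=\deg(Y_{nm})/\kappa!$. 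Because $\nu(L_{nm})\subset\Sigma_n\cap\ZZ^{\kappa}$ and $\Sigma_n\subset n\Delta(L)$, a standard Ehrhart-type estimate gives
$$
\dim_{\CC} L_{nm}=|\nu(L_{nm})|\leq |\Sigma_n\cap\ZZ^{\kappa}|=\mathrm{vol}_{\kappa}(\Sigma_n)+O(n^{\kappa-1})=\frac{\deg(Y_{nm})}{\kappa!}+O(n^{\kappa-1}).
$$
Dividing by $n^{\kappa}$ and letting $n\to\infty$ yields $A\leq\liminf_{n}\deg(Y_{nm})/(\kappa!\,n^{\kappa})$, matching the upper bound and proving the theorem. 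The main obstacle is not the volume comparison itself but the construction of the valuation $\nu$ together with the identity $\dim_{\CC} V=|\nu(V\setminus\{0\})|$ and the boundedness of $\Delta(L)$ in the reduced compact analytic setting; once that machinery is in place from the proof of Theorem \ref{Theorem5*}, the argument above is essentially a two-sided asymptotic volume comparison.
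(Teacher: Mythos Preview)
Your upper bound is exactly the paper's: the inclusion $(\CC[L_{nm}t])_k\subset L_{nmk}$ together with the Hilbert polynomial of $Y_{nm}$ and Theorem~\ref{Theorem5*} gives $\deg(Y_{nm})/(\kappa!\,n^{\kappa})\le A$. The identification $\dim Y_{nm}=\kappa$ for $n\gg 0$ is also handled the same way, via $\kappa(L^{[nm]})=\kappa(L)$ and graded Noether normalization.

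The lower bound, however, has two genuine gaps. First, you ask for a $\ZZ^{\kappa}$-valued valuation on the function field with one-dimensional leaves (so that $\dim_{\CC}V=|\nu(V\setminus\{0\})|$). The paper does not produce such a thing, and in general you cannot when $\kappa<d$: the valuation constructed in Section~\ref{SecLim} is the rank-$d$ valuation at a smooth point with value group $(\ZZ^d)_{\rm lex}$, so all the image sets $\nu(L_{nm})$ live in $\ZZ^d$ and the Okounkov body is a $\kappa$-dimensional set inside $\RR^d$. Second, and more damaging, your asserted identity $\mathrm{vol}_{\kappa}(\Sigma_n)=\deg(Y_{nm})/\kappa!$ is not what Khovanskii's theorem (Theorem~\ref{ConeTheorem3} here) gives: the correct statement is $\deg(Y_{nm})/\kappa!=\mathrm{vol}_{\kappa}(\Sigma_n)/\mathrm{ind}(S')$, where $S'=S(L^{[nm]})$. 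Since $\mathrm{ind}(S')\ge 1$, your chain
\[
\dim_{\CC}L_{nm}\le|\Sigma_n\cap\ZZ^{\kappa}|=\mathrm{vol}_{\kappa}(\Sigma_n)+O(n^{\kappa-1})
\]
only yields $A\le \mathrm{ind}(S')\cdot\liminf_n \deg(Y_{nm})/(\kappa!\,n^{\kappa})$, which does not close against the upper bound unless you control the index. (One could repair this by counting lattice points in $\Sigma_n$ intersected with the single coset of $G(S')\cap\partial M(S')_{\ZZ}$ that actually contains $\nu(L_{nm})$, but that is precisely the bookkeeping you have omitted.)

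The paper sidesteps both issues by \emph{not} computing $\mathrm{vol}(\Sigma_n)$ at all. Instead it invokes Theorem~\ref{ConeTheorem5}: for every $\epsilon>0$ there is $p_0$ so that for $p\ge p_0$,
\[
\lim_{n\to\infty}\frac{\#(n*S(L)_{pm})}{n^{\kappa}p^{\kappa}}\ \ge\ \frac{\mathrm{vol}_{\kappa}(\Delta(S(L)))}{\mathrm{ind}(S(L))}-\epsilon\ =\ A-\epsilon.
\]
Since $n*S(L)_{pm}\subset S(L^{[pm]})_{npm}$ and $\#S(L^{[pm]})_{npm}=\dim_{\CC}(L^{[pm]})_{npm}$, the Hilbert polynomial of $Y_{pm}$ gives $\deg(Y_{pm})/(\kappa!\,p^{\kappa})\ge A-\epsilon$ for $p\ge p_0$, and letting $p\to\infty$ finishes. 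This $\epsilon$-approximation by sumsets is the substitute for your Ehrhart step, and it works directly in $\ZZ^d$ with the index already built in.
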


Here $\deg(Y_{nm})$ is the multiplicity of the  graded $\CC$-algebra $\CC[L_{nm}t]$.

Theorem \ref{Theorem100*} is proven by Kaveh and Khovanskii \cite{KK} when $X$ is a projective variety over  an algebraically closed field (Theorem 3.3 \cite{KK}). We prove the theorem for a proper algebraic variety over an arbitrary field in Theorem 7.2 \cite{C4}.

\begin{Corollary}\label{Theorem12**} Suppose that $X$ is a compact complex analytic variety of dimension $d$ and $\mathcal L$ is a  line bundle on $X$.  Then  the limit
$$
\lim_{n\rightarrow \infty}\frac{\dim_{\CC} \Gamma(X,\mathcal L^n)}{n^d}
$$
exists. 
\end{Corollary}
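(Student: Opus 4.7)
The plan is to apply Theorem \ref{Theorem5*} to the graded linear series $L=\bigoplus_{n\ge 0}\Gamma(X,\mathcal{L}^n)$, whose Kodaira-Iitaka dimension satisfies $\kappa=\kappa(L)\le d$. If $L_n=0$ for every $n>0$, the limit is trivially $0$, so I assume $\kappa\ge 0$ and set $m=m(L)$.

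Suppose first that $\kappa<d$. Theorem \ref{Theorem5*} gives $\dim_{\CC}L_{nm}=O(n^{\kappa})$, hence $\dim_{\CC}L_{km}/(km)^d=O(k^{\kappa-d})\to 0$, while $\dim_{\CC}L_n/n^d=0$ whenever $m\nmid n$ by definition of $m$. Therefore $\lim_{n\to\infty}\dim_{\CC}L_n/n^d=0$.

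In the remaining case $\kappa=d$, Theorem \ref{Theorem5*} supplies a limit $\lim_{k\to\infty}\dim_{\CC}L_{km}/k^d=c\ge 0$. If $c=0$ the previous paragraph's argument still applies. If $c>0$ I would aim to show $m=1$, so that Theorem \ref{Theorem5*} delivers the limit along the full sequence $n\to\infty$; note that when $m>1$ the values of $\dim_{\CC}L_n/n^d$ oscillate between $0$ (for $m\nmid n$) and $c/m^d>0$ (along $n=km$), so the limit cannot exist unless $m=1$. The key structural input is that, since $X$ is irreducible and reduced, the product of nonzero sections is a nonzero section, so the set $S=\{n\ge 0\mid L_n\ne 0\}$ is a subsemigroup of $\NN$.

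The hard part is thus to establish an analytic analogue of Kodaira's lemma: if $\kappa(\mathcal L)=\dim X$, then $\Gamma(X,\mathcal L^n)\ne 0$ for all $n\gg 0$. In the projective algebraic setting this is classical (bigness equals ample plus effective up to $\QQ$-coefficients); in the compact complex analytic setting, I would expect to reduce to that case by desingularization, passing to a bimeromorphic projective or Moishezon model and pulling back sections. Granted this input, $S$ is cofinite in $\NN$, hence $\gcd(S)=1$ and $m=1$, completing the proof.
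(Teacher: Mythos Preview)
Your outline matches the paper's proof almost exactly: reduce to Theorem~\ref{Theorem5*} applied to $L=\bigoplus_n\Gamma(X,\mathcal L^n)$, dispose of $\kappa<d$ immediately, and in the case $\kappa=d$ argue that $m(L)=1$ via Moishezon's theorem. The paper carries out precisely this last step, citing Moishezon to obtain a modification $\pi:X'\to X$ with $X'$ smooth projective, and then Lemma~2.2 of \cite{LM} to get $m(L')=1$ for $L'=\bigoplus_n\Gamma(X',\pi^*\mathcal L^n)$.

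The one place your sketch is too loose is the phrase ``pulling back sections.'' Pullback goes the wrong way: it embeds $\Gamma(X,\mathcal L^n)$ into $\Gamma(X',\pi^*\mathcal L^n)$, so knowing the latter is nonzero (or big) does not by itself give sections on $X$. The paper handles the descent by the short exact sequence $0\to\mathcal O_X\to\pi_*\mathcal O_{X'}\to\mathcal F\to 0$ with $\mathcal F$ supported in dimension $<d$; tensoring with $\mathcal L^n$ and taking global sections shows that $\dim_{\CC}\Gamma(X',\pi^*\mathcal L^n)-\dim_{\CC}\Gamma(X,\mathcal L^n)=O(n^{d-1})$, whence the growth of order $n^d$ on $X'$ forces $\Gamma(X,\mathcal L^n)\ne 0$ for $n\gg 0$ and $m(L)=1$. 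This is exactly the ``analytic Kodaira lemma'' you wanted, and with it your argument is complete and identical to the paper's. (Your subcase $c=0$ is in fact vacuous: once $\kappa=d$, Lemma~\ref{LemmaKI}(3) and equation~(\ref{eqnr80}) force the limit to be positive.)
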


\begin{Theorem}\label{Theorem18*} Suppose that $X$ is a compact reduced complex analytic space. 
Let $L$ be a graded linear series on $X$ with Kodaira-Iitaka dimension  $\kappa=\kappa(L)\ge 0$.  
 Then there exists a positive integer $r$ such that 
$$
\lim_{n\rightarrow \infty}\frac{\dim_{\CC} L_{a+nr}}{n^{\kappa}}
$$
exists for any fixed $a\in \NN$.
\end{Theorem}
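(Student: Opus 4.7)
The plan is induction on the number $s$ of irreducible components of $X$. When $s=1$, $X$ is itself a compact complex analytic variety and Theorem \ref{Theorem5*}, together with the remark immediately following it, yields the conclusion with $r=m(L)$.

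For $s\ge 2$, let $X_1,\ldots,X_s$ be the irreducible components of $X$, set $X'=X_2\cup\cdots\cup X_s$ (a compact reduced complex analytic space with $s-1$ components), and let $\mathcal L$ be the line bundle on $X$ underlying $L$, so that $L_n\subset \Gamma(X,\mathcal L^n)$. Define $L^{(1)}_n\subset \Gamma(X_1,\mathcal L^n|_{X_1})$ to be the image of $L_n$ under restriction to $X_1$, and set $K_n=\{s\in L_n:s|_{X_1}=0\}$, so that
$$
0\to K_n\to L_n\to L^{(1)}_n\to 0
$$
is exact. Then $L^{(1)}=\bigoplus_n L^{(1)}_n$ is a graded linear series on the compact complex analytic variety $X_1$, and $K=\bigoplus_n K_n$ is closed under multiplication of sections. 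Because $X$ is reduced and $X=X_1\cup X'$, any section of $\mathcal L^n$ vanishing on both $X_1$ and $X'$ is zero on $X$; hence the restriction $K_n\hookrightarrow \Gamma(X',\mathcal L^n|_{X'})$ is injective, and $K$ is thereby identified with a graded linear series on the reduced space $X'$.

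Set $\kappa_1=\kappa(L^{(1)})$ and $\kappa'=\kappa(K)$. Since $L^{(1)}_n$ is a quotient and $K_n$ a subspace of $L_n$, both dimensions are bounded above by that of $L_n$, so $\kappa_1,\kappa'\le\kappa$; the identity $\dim_{\CC}L_n=\dim_{\CC}L^{(1)}_n+\dim_{\CC}K_n$ moreover forces $\kappa=\max(\kappa_1,\kappa')$. Apply Theorem \ref{Theorem5*} and its remark to $L^{(1)}$ on $X_1$ to obtain a positive integer $r_1=m(L^{(1)})$ for which $\lim_{n\to\infty}\dim_{\CC}L^{(1)}_{a+nr_1}/n^{\kappa_1}$ exists for every $a\in\NN$. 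Apply the inductive hypothesis to $K$ on $X'$ to obtain $r_K$ for which $\lim_{n\to\infty}\dim_{\CC}K_{a+nr_K}/n^{\kappa'}$ exists for every $a$. Setting $r=\mathrm{lcm}(r_1,r_K)$, each of $\dim_{\CC}L^{(1)}_{a+nr}/n^\kappa$ and $\dim_{\CC}K_{a+nr}/n^\kappa$ then converges as $n\to\infty$: to $0$ when the corresponding Kodaira--Iitaka dimension is strictly less than $\kappa$, and to the limit just produced when it equals $\kappa$. Summing through the exact sequence gives the existence of $\lim_{n\to\infty}\dim_{\CC}L_{a+nr}/n^\kappa$. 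The degenerate cases $L^{(1)}=0$ or $K=0$ are handled by omitting the corresponding step.

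The main obstacle is the inductive setup: verifying that $K$ genuinely embeds as a graded linear series on $X'$ (this is precisely where reducedness of $X$ is used), that the multiplicative structure of $K$ survives this identification, and that $\kappa(L)=\max(\kappa_1,\kappa(K))$ follows cleanly from additivity in the short exact sequence. Once these bookkeeping points are secured, the remaining step is the straightforward alignment of arithmetic progressions through $r=\mathrm{lcm}(r_1,r_K)$.
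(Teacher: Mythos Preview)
Your proof is correct and is essentially the paper's argument phrased inductively: the paper performs the same decomposition in one pass by setting $M^0=L$, $M^i=K(M^{i-1},X_i)$, observing $M^s=0$ by reducedness, and summing $\dim_{\CC}L_n=\sum_{i=1}^s\dim_{\CC}(M^{i-1}|X_i)_n$, then taking $r$ to be the lcm of the indices $m(M^{i-1}|X_i)$ over those $i$ with $\kappa(M^{i-1}|X_i)=\kappa(L)$. The equality $\kappa=\max(\kappa_1,\kappa')$ that you extract from dimension additivity is packaged in the paper as Lemma~\ref{Lemma50a} (together with the choice of an $X_1$ with $\kappa(L|X_1)=\kappa(L)$), which makes that bookkeeping step immediate.
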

The theorem says that 
$$
\lim_{n\rightarrow \infty}\frac{\dim_{\CC} L_{n}}{n^{\kappa}}
$$
exists if $n$ is constrained to lie in an arithmetic sequence $a+br$ with $r$ as above, and for some fixed $a$. The conclusions of the theorem are a little weaker than the conclusions of Theorem \ref{Theorem5} for  varieties. In particular, the index $m(L)$ has little relevance on reduced but not irreducible schemes (as shown by the example after Theorem 9.2 \cite{C4} and Example 5.5 \cite{C2}).

Theorem \ref{Theorem18*} is proven in Theorem 5.2 \cite{C2} for reduced projective schemes over a perfect field and for reduced proper schemes over an arbitrary field in Theorem 8.2 \cite{C4}.

$m_R$ will denote the maximal ideal of a local ring $R$. $Q(R)$ will denote the quotient field of a domain $R$.
 $\ZZ_+$ denotes the positive integers and $\NN$ the nonnegative integers. 

\section{Cones associated to semigroups}\label{SecCone}
In this section we summarize some results on semigroups and associated cones from \cite{KK}.
Suppose that $S$ is a subsemigroup of $\ZZ^{d}\times \NN$ which is not contained in $\ZZ^d\times\{0\}$. Let $L(S)$ be the subspace of $\RR^{d+1}$ which is generated by $S$, and let $M(S)=L(S)\cap(\RR^d\times\RR_{\ge 0})$. 

Let $\mbox{Con}(S)\subset L(S)$ be the closed convex cone which is the closure of  the set of all linear combinations $\sum \lambda_is_i$ with $s_i\in S$ and $\lambda_i\ge 0$.

$S$ is called {\it strongly nonnegative} (Section 1.4 \cite{KK}) if $\mbox{Cone}(S)$  intersects $\partial M(S)$ only at the origin (this is equivalent to being strongly admissible (Definition 1.9 \cite{KK}) since with our assumptions, $\mbox{Cone}(S)$ is contained in $\RR^d\times\RR_{\ge 0}$,  so the ridge of of $S$ must be contained in $\partial M(S)$). In particular, a subsemigroup of a strongly negative semigroup is itself strongly negative.

We now introduce some notation from \cite{KK}. Let 
\vskip .1truein

$S_k=S\cap (\RR^d\times\{k\})$.

$\Delta(S)=\mbox{Con}(S)\cap (\RR^{d}\times\{1\})$ (the Newton-Okounkov body of $S$).

$q(S)=\dim \partial M(S)$.

$G(S)$ be the subgroup of $\ZZ^{d+1}$ generated by $S$.

$m(S)=[\ZZ:\pi(G(S))]$
  where $\pi:\RR^{d+1}\rightarrow \RR$ be projection onto the last factor.

$\mbox{ind}(S)= [\partial M(S)_{\ZZ}:G(S)\cap \partial M(S)_{\ZZ}]$
where 

$\partial M(S)_{\ZZ}:=\partial M(S)\cap \ZZ^{d+1}= M(S)\cap (\ZZ^d\times\{0\})$.

${\rm vol}_{q(S)}(\Delta(S))$ is the integral volume of $\Delta(S)$. This volume is computed using the translation of the integral measure on $\partial M(S)$.
\vskip .2truein

$S$ is strongly negative if and only if $\Delta(S)$ is a compact set. If $S$ is strongly negative, then the dimension of $\Delta(S)$ is $q(S)$.
\vskip .1truein

\begin{Theorem}\label{ConeTheorem3}(Kaveh and Khovanskii) Suppose that $S$ is strongly nonnegative.  Then 
$$
\lim_{k\rightarrow \infty}\frac{\#S_{m(S)k}}{k^{q(S)}}=\frac{{\rm vol}_{q(S)}(\Delta(S))}{{\rm ind}(S)}.
$$
\end{Theorem}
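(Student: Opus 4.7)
The plan is to approximate $S$ by finitely generated subsemigroups, apply Khovanskii's structure theorem to each, and then count lattice points in the Newton-Okounkov slice by classical convex-body estimates.

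\emph{Step 1 (Reduction to the finitely generated case).} I would exhaust $S$ from below by finitely generated subsemigroups $T\subseteq S$ with $G(T)=G(S)$: first adjoin a finite subset of $S$ that generates $G(S)$ as a group, so that $\mathrm{ind}(T)=\mathrm{ind}(S)$ and $m(T)=m(S)$; then enlarge $T$ by finitely many additional elements of $S$ so that the rational polytope $\Delta(T)$ converges in integral volume to $\Delta(S)$. Strong nonnegativity is inherited by any subsemigroup. Since $\#T_{mk}\leq\#S_{mk}$, it suffices to establish matching $\liminf$ lower bounds from the $T$ side and matching $\limsup$ upper bounds using lattice counts in the full cone.

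\emph{Step 2 (Khovanskii's semigroup theorem applied to $T$).} For a finitely generated strongly nonnegative $T$, Khovanskii's theorem provides a constant $C$ such that every lattice point of $G(T)\cap\mathrm{Cone}(T)$ at Euclidean distance greater than $C$ from $\partial\mathrm{Cone}(T)$ lies in $T$. Strong nonnegativity forces the proper faces of $\mathrm{Cone}(T)$ to meet $\RR^d\times\{0\}$ only at the origin, so at level $mk$ the lattice points of $\mathrm{Cone}(T)$ missed by $T$ all lie within a $C$-tube around the relative boundary of the slice $mk\cdot\Delta(T)$, a set of cardinality $O(k^{q(S)-1})$.

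\emph{Step 3 (Lattice-point count on the slice).} The slice $\mathrm{Cone}(T)\cap(\RR^d\times\{mk\})$ is a translate of $mk\cdot\Delta(T)$, and the lattice of relevant points is a coset of the rank-$q(S)$ group $H:=G(T)\cap\partial M(S)_\ZZ$, which has index $\mathrm{ind}(T)$ in $\partial M(S)_\ZZ$. A standard lattice-point enumeration for dilates of a convex body, computed in the integral measure on $\partial M(S)$, gives
\[
\#\bigl(G(T)\cap\mathrm{Cone}(T)\cap(\RR^d\times\{mk\})\bigr) \;=\; \frac{\mathrm{vol}_{q(S)}(mk\cdot\Delta(T))}{\mathrm{ind}(T)}\;+\;O(k^{q(S)-1}).
\]
Combining with Step 2, dividing by $k^{q(S)}$, and letting $T$ exhaust $S$ (so $\Delta(T)\to\Delta(S)$ in integral volume while $\mathrm{ind}(T)=\mathrm{ind}(S)$) yields the claimed limit.

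The main obstacle is Khovanskii's structure theorem itself: its proof requires a careful induction on the faces of the rational polyhedral cone $\mathrm{Cone}(T)$ to show that the semigroup defect accumulates only near $\partial\mathrm{Cone}(T)$. Once this result is in hand, the exhaustion step and the convex-body counting on each slice are routine.
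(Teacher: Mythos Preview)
The paper does not supply its own proof of this statement: immediately after the theorem it simply records ``This is proven in Corollary 1.16 \cite{KK}.'' So there is nothing in the paper to compare against beyond that citation. Your sketch is a faithful outline of the Kaveh--Khovanskii argument behind that corollary --- exhaust $S$ by finitely generated subsemigroups $T$ with $G(T)=G(S)$, invoke Khovanskii's structure theorem to confine the semigroup defect of each $T$ to a tubular neighborhood of $\partial\,\mathrm{Cone}(T)$, and finish with an Ehrhart-type lattice-point count on the dilated slices --- so there is no divergence in approach to report.
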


This is proven in  Corollary 1.16 \cite{KK}. 

With our assumptions, we have that $S_n=\emptyset$ if $m(S)\not\,\mid n$ and  the limit is positive, since
${\rm vol}_{q(S)}(\Delta(S))>0$.

\begin{Theorem}\label{ConeTheorem4}(Kaveh and Khovanskii) Suppose that $q$ is a positive integer such there exists a sequence $k_i\rightarrow \infty$ of positive integers such that  the sequence $\#S_{m(S)k_i}/k_i^q$ is bounded. Then $S$ is strongly nonnegative with $q(S)\le q$.
\end{Theorem}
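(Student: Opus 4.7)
\medskip
\noindent\textbf{Proof proposal.}
I would argue by contrapositive: assume the conclusion fails, so either (a) $S$ is strongly nonnegative with $q(S)>q$, or (b) $S$ is not strongly nonnegative, and in either case show that $\#S_{m(S)k}/k^{q}\to\infty$ along every sequence $k\to\infty$, contradicting the hypothesized boundedness. Case (a) is immediate from Theorem \ref{ConeTheorem3}: the ratio $\#S_{m(S)k}/k^{q(S)}$ tends to the strictly positive limit ${\rm vol}_{q(S)}(\Delta(S))/{\rm ind}(S)$, so the diverging factor $k^{q(S)-q}$ drives $\#S_{m(S)k}/k^{q}$ to infinity.

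The substantive step is Case (b), in which $\mathrm{Cone}(S)\cap\partial M(S)$ has some positive dimension $h\ge 1$. My plan is to exploit these horizontal directions inside $\mathrm{Cone}(S)$ to force $\#S_{m(S)k}$ to grow polynomially in $k$ at an order larger than any $q$ compatible with the hypothesis. I would fix one element $(s_0,n_0)\in S$ with $n_0\ge 1$ (which exists since $S\not\subseteq\ZZ^{d}\times\{0\}$), and, using density of rationals in the horizontal slice of $\mathrm{Cone}(S)$ together with clearing of denominators, select a nonzero lattice vector $(v,0)\in G(S)\cap\mathrm{Cone}(S)\cap\partial M(S)$. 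The crucial technical step is then a Gordan--Khovanskii-type translate lemma: because $(v,0)$ lies in the closed cone generated by $S$, there is a constant $C>0$ such that, for all sufficiently large $j$, every lattice point $j(s_0,n_0)+i(v,0)$ with $0\le i\le Cj$ actually belongs to $S$. Summing over $j$ yields an extra linear factor in $\#S_{jn_0}$. Iterating this construction along $h$ rationally independent horizontal rays of $\mathrm{Cone}(S)\cap\partial M(S)$ and combining it with the polynomial growth of a ``vertical'' strongly nonnegative subsemigroup $T\subseteq S$ (to which Theorem \ref{ConeTheorem3} applies), I get a lower bound of the form $\#S_{m(S)k}\gtrsim k^{q(T)+h}$. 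Taking $T$ of maximal transverse dimension makes this exponent strictly larger than $q$, closing the contrapositive.

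The main obstacle is the translate lemma in Case (b): converting the closed-cone condition $(v,0)\in\mathrm{Cone}(S)$, which is an abstract limit statement, into the genuine semigroup membership $j(s_0,n_0)+i(v,0)\in S$ for enough lattice points $(i,j)$. This is the technical heart of the Kaveh--Khovanskii theory -- a variant of Gordan's lemma asserting that integer points of a cone eventually lie in a translate of any generating subsemigroup -- and it is exactly what distinguishes the asymptotic behaviour of $S$ itself from that of the potentially larger ambient cone $\mathrm{Cone}(S)$. Once this lemma is in hand, the remaining dimension count for Case (b) is routine.
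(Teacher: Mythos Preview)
The paper does not give its own proof of this statement: immediately after the theorem it says only ``This is proven in Theorem 1.18 \cite{KK}.'' So there is nothing in the present paper to compare your proposal against --- you are attempting to reconstruct the Kaveh--Khovanskii argument, not one supplied here.

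On the substance of your outline: Case (a) is fine and is exactly how Theorem~\ref{ConeTheorem3} gets used. Case (b), however, does not close. Your final step asserts that by choosing $T$ of maximal transverse dimension the exponent $q(T)+h$ becomes \emph{strictly larger} than the given $q$, but nothing in your argument controls $q$ from above in terms of $q(T)$ and $h$. Concretely, take $S=\{(a,b)\in\ZZ\times\NN:0\le a\le b^{2}\}$: this is a subsemigroup of $\ZZ\times\NN$, one has $m(S)=1$ and $\#S_{k}=k^{2}+1$, so the hypothesis is satisfied with $q=2$; yet $(1,0)\in\mathrm{Cone}(S)\cap\partial M(S)$, so $S$ is not strongly nonnegative. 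Here $q(S)=1$, $h=1$, and any strongly nonnegative $T\subset S$ has $q(T)\le 1$, so your lower bound $k^{q(T)+h}\le k^{2}$ does not exceed $k^{q}$. This example shows both that your contrapositive argument cannot work as written and that the paper's paraphrase of the Kaveh--Khovanskii result is somewhat loose (in the paper's application only the case $q=d$ with $S\subset\ZZ^{d}\times\NN$ is needed, and additional structure coming from the valuation is present). You should consult Theorem~1.18 of \cite{KK} directly for the precise hypotheses before trying to supply a proof.
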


This is proven in Theorem 1.18 \cite{KK}.

The following theorem generalizes Proposition 3.4 \cite{LM}.

\begin{Theorem}\label{ConeTheorem5}(Theorem 3.3 \cite{C4}) Suppose that $S$ is  strongly nonnegative. Fix $\epsilon>0$. Then there is an integer $p=p_0(\epsilon)$ such that if $p\ge p_0$, then the limit
$$
\lim_{n\rightarrow\infty}\frac{\#(n*S_{pm(S)})}{n^{q(S)}p^{q(S)}}\ge \frac{{\rm vol}_{q(S)}\Delta(S)}{{\rm ind}(S)}-\epsilon
$$
exists, where
$$
n*S_{pm(S)}=\{x_1+\cdots+x_n\mid x_1,\ldots,x_n\in S_{pm(S)}\}.
$$
\end{Theorem}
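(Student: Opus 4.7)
The natural strategy is to apply Theorem \ref{ConeTheorem3} to the subsemigroup $T(p)\subset S$ generated by $S_{pm(S)}\cup\{0\}$. Since $T(p)$ is a subsemigroup of the strongly nonnegative semigroup $S$, it is itself strongly nonnegative (by the remark in Section \ref{SecCone}). By construction $n*S_{pm(S)}=T(p)_{npm(S)}$, and since every nonzero element of $T(p)$ lies at a level in $pm(S)\NN$, we have $m(T(p))=pm(S)$. Theorem \ref{ConeTheorem3} applied to $T(p)$ therefore produces the existence of
$$
\lim_{n\to\infty}\frac{\#(n*S_{pm(S)})}{n^{q(T(p))}}=\frac{{\rm vol}_{q(T(p))}(\Delta(T(p)))}{{\rm ind}(T(p))}.
$$

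Next I would verify that for $p$ sufficiently large, the combinatorial data of $T(p)$ stabilize to that of $S$. Note that $\Delta(T(p))$ is precisely the convex hull in $\RR^d\times\{1\}$ of the rescaled finite set $\{s/(pm(S)):s\in S_{pm(S)}\}\subset\Delta(S)$. A standard density argument (of the kind already invoked in the proof of Theorem \ref{ConeTheorem3} for $S$ itself) shows that, as $p\to\infty$, these rescaled sets become Hausdorff-dense in $\Delta(S)$, so $\Delta(T(p))\to\Delta(S)$ in Hausdorff distance; in particular $q(T(p))=q(S)$ for $p$ large, and continuity of Lebesgue measure on convex bodies gives ${\rm vol}_{q(S)}(\Delta(T(p)))\to{\rm vol}_{q(S)}(\Delta(S))$. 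In parallel, the sublattice $G(T(p))\cap\partial M(T(p))_{\ZZ}$ is an increasing family inside the finitely generated lattice $G(S)\cap\partial M(S)_{\ZZ}$, and any fixed generator of the latter is a $\ZZ$-combination of finitely many differences of elements of $S$, hence lies in $G(T(p))$ once $p$ is large; so ${\rm ind}(T(p))\to{\rm ind}(S)$.

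Combining these steps, for all $p\ge p_0$ sufficiently large the limit
$$
\lim_{n\to\infty}\frac{\#(n*S_{pm(S)})}{n^{q(S)}p^{q(S)}}
$$
exists by the first paragraph (after dividing by $p^{q(S)}$) and has, by the second paragraph, a value converging to ${\rm vol}_{q(S)}(\Delta(S))/{\rm ind}(S)$ as $p\to\infty$. Choosing $p_0=p_0(\epsilon)$ large enough that this value exceeds ${\rm vol}_{q(S)}(\Delta(S))/{\rm ind}(S)-\epsilon$ completes the proof.

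The main obstacle I expect is the careful bookkeeping of the factor $p^{q(S)}$ against the lattice normalizations built into ${\rm vol}_{q(S)}$ and ${\rm ind}$: one must verify that the intrinsic ratio ${\rm vol}_{q(T(p))}(\Delta(T(p)))/{\rm ind}(T(p))$ furnished by Theorem \ref{ConeTheorem3} for $T(p)$, whose ``step'' is $pm(S)$ rather than $m(S)$, carries exactly the factor of $p^{q(S)}$ that matches the denominator in the statement of the theorem, so that the rescaled limit indeed converges to the correct ratio for $S$. Once this scaling compatibility is pinned down, the density and stabilization arguments above supply the remaining $\epsilon$-approximation.
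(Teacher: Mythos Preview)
The paper does not provide a proof of Theorem \ref{ConeTheorem5}; it is stated in Section \ref{SecCone} with attribution to Theorem 3.3 of \cite{C4}, and no argument is given in this paper. So there is nothing here to compare your proposal against. Your overall strategy is nonetheless the standard one (and is that of \cite{C4}, extending Proposition 3.4 of \cite{LM}): apply Theorem \ref{ConeTheorem3} to the subsemigroup $T(p)$ generated by $S_{pm(S)}$, then show that the invariants of $T(p)$ approximate those of $S$ as $p\to\infty$.

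There is, however, a genuine gap in your sketch of the index stabilization. The family $G(T(p))\cap\partial M(S)_{\ZZ}$ is \emph{not} increasing in $p$: there is no containment relation between $S_{pm(S)}$ and $S_{p'm(S)}$ for $p<p'$, so the groups they generate need not be nested. And your argument that a fixed generator of $G(S)\cap\partial M(S)_{\ZZ}$, being a $\ZZ$-combination of finitely many elements of $S$ (at possibly many different levels), therefore lies in $G(T(p))$ for $p$ large, does not follow as stated, since $G(T(p))$ is generated only by the single level set $S_{pm(S)}$. What one actually uses is the structural input from \cite{KK} underlying Theorem \ref{ConeTheorem3}: for $p$ large, $S_{pm(S)}$ coincides with all of $G(S)\cap{\rm Con}(S)$ at level $pm(S)$ away from a bounded boundary region, and from this one reads off directly that differences of elements of $S_{pm(S)}$ already generate $G(S)\cap\partial M(S)_{\ZZ}$, so that ${\rm ind}(T(p))={\rm ind}(S)$ for $p\gg 0$. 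You are right that the $p^{q(S)}$ normalization is the other place where the bookkeeping must be done carefully.
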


\section{Compact reduced complex analytic spaces}\label{SecProp}

Suppose that $X$ is a $d$-dimensional compact reduced complex analytic space, and $\mathcal L$ is a line bundle on $X$, by which we will mean
the sheaf of holomorphic sections of a (geometric) complex analytic line bundle.  
The section ring
$$
\bigoplus_{n\ge 0}\Gamma(X,\mathcal L^n)
$$
is a graded $\CC$-algebra. Each $\Gamma(X,\mathcal L^n)$ is a finite dimensional complex vector space since $X$ is compact, and
$\Gamma(X,\mathcal O_X)=\CC$ since $X$ is reduced.
 A graded $\CC$-subalgebra $L=\bigoplus_{n\ge 0}L_n$ of a section ring of a line bundle $\mathcal L$ on $X$ is called a {\it graded linear series} for $\mathcal L$. 

 We define the {\it Kodaira-Iitaka dimension} $\kappa=\kappa(L)$ of a graded linear series $L$  as follows.
Let
$$
\sigma(L)=\max \left\{m\mid 
\begin{array}{l}
 \mbox{there exists $y_1,\ldots,y_m\in L$ which are homogeneous of  positive}\\
\mbox{degree and are algebraically independent over $\CC$}
\end{array}\right\}.
$$
$\kappa(L)$ is then defined as
$$
\kappa(L)=\left\{\begin{array}{ll}
\sigma(L)-1 &\mbox{ if }\sigma(L)>0\\
-\infty&\mbox{ if }\sigma(L)=0
\end{array}\right.
$$

This definition is in agreement with the classical definition for line bundles on normal projective varieties (Definition in Section 10.1 \cite{I} or Chapter 2 \cite{La}), although our $\kappa(L)$ can be smaller on a non normal variety.

In the case that $X$ is also irreducible, let $\CC(X)$ denote the meromorphic function field of $X$.
The algebraic dimension of $X$ is  $a(X):= \mbox{trdeg}_{\CC}\CC(X)$. It is shown in Section 3 of \cite{U} that if $X_1$ and $X_2$ are bimeromorphic then
$a(X_1)=a(X_2)$ and (Theorem 3.1 \cite{U}) that 
\begin{equation}\label{eqC2}
a(X)\le \dim X.
\end{equation}

In the case when $X$ is reduced, with irreducible components $X_1,\ldots,X_s$, we define $a(X)=\max_i\{a_i(X)\}$.

The following lemma generalizes Lemma 2.1 \cite{C2} to reduced analytic spaces.

\begin{Lemma}\label{LemmaKI} Suppose that $L$ is a graded linear series on a $d$-dimensional compact, reduced complex analytic space $X$. Then
\begin{enumerate}\item[1)]
\begin{equation}\label{eqKI1}
\kappa(L)\le a(X)
\end{equation}
\item[2)] There exists a positive constant $\gamma$ such that 
\begin{equation}\label{eqKI4}
\dim_{\CC} L_n<\gamma n^{a(X)}
\end{equation}
for all $n$.
\item[3)] Suppose that $\kappa(L)\ge 0$. Then there exists a positive constant $\alpha$ and a positive integer $e$ such that 
\begin{equation}\label{eqKI2}
\dim_{\CC}L_{en}>\alpha n^{\kappa(L)}
\end{equation}
for all positive integers $n$. 
\item[4)] Suppose that  $L$ is a graded linear series on $X$. Then $\kappa(L)=-\infty$ if and only if $L_n=0$ for all $n>0$.
\end{enumerate}
\end{Lemma}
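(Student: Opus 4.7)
The plan is to dispatch (4) from the definition, prove (1) and (3) via the standard ratio and polynomial-subring arguments, and then handle (2)---the hardest part---using a Siegel-type growth bound. Parts (1) and (2) are proved first in the irreducible case and then transferred to the reduced case via the injection $L \hookrightarrow \bigoplus_i L|_{X_i}$ afforded by reducedness. Part (4) is immediate: a nonzero homogeneous $y \in L$ of positive degree is transcendental over $\CC$, because its powers lie in distinct graded pieces and each $y^k$ is nonzero (some component $X_i$ has $y|_{X_i} \ne 0$ in the domain $\bigoplus_n \Gamma(X_i, \mathcal L^n|_{X_i})$), so $\sigma(L)=0$ forces $L_n = 0$ for all $n \ge 1$.

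For (1) in the irreducible case, given algebraically independent homogeneous $y_1, \ldots, y_m \in L$ of positive degrees, replace each by a suitable power so that all share a common degree $d$; then $y_2/y_1, \ldots, y_m/y_1 \in \CC(X)$ are meromorphic functions that remain algebraically independent over $\CC$ (any polynomial relation would clear to a nontrivial homogeneous relation among the $y_i$), yielding $m - 1 \le a(X)$. In the reducible case, if the restrictions to every $X_i$ satisfied a nonzero polynomial relation $P_i$, then $\prod_i P_i$ would annihilate $(y_1,\ldots,y_m)$ on each component and hence in $L$, contradicting algebraic independence; so the irreducible case applies on some $X_i$ and gives $m - 1 \le a(X_i) \le a(X)$. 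Part (3) is dual: extract $\kappa + 1$ algebraically independent homogeneous elements $y_1, \ldots, y_{\kappa+1} \in L$ of positive degrees, pass to a common degree $e$, and observe that $\CC[y_1,\ldots,y_{\kappa+1}] \subseteq L$ is a polynomial subring whose degree-$en$ piece has dimension $\binom{n+\kappa}{\kappa} \ge n^{\kappa}/\kappa!$, which produces a suitable $\alpha$.

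Part (2) is the main obstacle, because it asks for an upper bound on $\dim_{\CC} L_n$ and not merely on a subring generated by finitely many elements. In the irreducible case, $L_n \hookrightarrow \Gamma(X, \mathcal L^n)$, and the Siegel-type bound $\dim_{\CC} \Gamma(X, \mathcal L^n) \le C n^{a(X)}$ from the Iitaka--Ueno theory in \cite{U} supplies a suitable $\gamma$. In the reducible case, $\dim_{\CC} L_n \le \sum_i \dim_{\CC} L_n|_{X_i} \le \sum_i C_i n^{a(X_i)} \le \gamma n^{a(X)}$ for suitably large $\gamma$. The crucial external input is this Siegel-type growth estimate; a more self-contained alternative would build Okounkov-type semigroups $S$ from rank-$a(X_i)$ valuations on the meromorphic function field of each component so that $\dim_{\CC} L_n = \#S_n$, and then apply Theorem \ref{ConeTheorem4} to force $q(S) \le a(X)$ and the desired polynomial bound purely from the cone machinery of Section \ref{SecCone}.
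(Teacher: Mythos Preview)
Your main argument is correct and matches the paper's: ratios of homogeneous elements of equal degree give algebraically independent meromorphic functions for (1), Ueno's Siegel-type bound handles (2), a polynomial subring in $\kappa+1$ variables gives (3), and (4) is immediate. One omission worth noting: the paper is explicit that Ueno's Theorem~8.1 is stated for \emph{normal} irreducible spaces, so for (2) it passes to the normalization $\pi:X'\to X$ (the disjoint union of the normalizations $X_i'$ of the components) and uses the injection $\Gamma(X,\mathcal L^n)\hookrightarrow\bigoplus_i\Gamma(X_i',\pi^*\mathcal L^n|_{X_i'})$ together with $a(X_i')=a(X_i)$. You should insert this normalization step rather than invoking Ueno directly on a possibly non-normal $X_i$.

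Your proposed ``more self-contained alternative'' for (2), via a rank-$a(X_i)$ valuation on the meromorphic function field $\CC(X_i)$, does not work as stated. After trivializing $\mathcal L$ at a smooth point $Q$, sections of $\mathcal L^n$ land in $\mathcal O_{X_i,Q}$, whose fraction field has transcendence degree $d=\dim X_i$ over $\CC$ (it contains a convergent power series ring in $d$ variables), not $a(X_i)$; only \emph{ratios} of sections of the same power lie in $\CC(X_i)$, so there is no natural map $L_n\to\CC(X_i)$, uniform in $n$, on which to evaluate such a valuation and conclude $\dim_\CC L_n=\#S_n$. This is exactly why the paper builds a rank-$d$ valuation on $Q(\mathcal O_{X,Q})$ in Section~\ref{SecLim} and then needs the bound (2) as an \emph{external} input to apply Theorem~\ref{ConeTheorem4}; the cone machinery alone cannot recover it.
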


\begin{proof} We first prove 1). Suppose that $L_n\ne 0$ for some $n>0$. There there exists a positive integer $e$ such that if
$$
B=\bigoplus_{n\ge 0}B_n:=\CC[L_et^e]\subset \sum L_nt^n=L
$$
then $\sigma(B)=\sigma(L)$. We have that 
$$
\kappa(B)=\mbox{Krull dimension}(B)-1
$$
by Lemma 8.1 \cite{C2}. 

First assume that $X$ is irreducible. Then 
$$
\mbox{Krull dimension}(B)=\mbox{trdeg}_{\CC}Q(B).
$$
Suppose that $f_0,\ldots,f_r$ generate $L_e$ as a complex vector space. Then 
$$
Q(B)=K(f_0t^e)
$$
where 
$$
K=\CC(\frac{f_1}{f_0},\ldots,\frac{f_r}{f_0})\subset \CC(X).
$$
Hence 
$$
\mbox{trdeg}_{\CC}Q(B)\le a(X)+1,
$$
and 1) follows. 

Now assume that $X$ is reduced but not irreducible. Let $X_i$ for $1\le i\le s$ be the irreducible components of $X$. We have (since $X$ is reduced) inclusions of $\mathcal O_X$-modules
$$
0\rightarrow \mathcal O_X\rightarrow \bigoplus_{i=1}^s\mathcal O_{X_i},
$$
giving us inclusions
$$
B_n\subset L_{ne}\subset \Gamma(X,\mathcal L^{ne})\subset \bigoplus_{i=1}^s\Gamma(X_i,(\mathcal L|X_i)^{ne})
$$
for all $n$. Let $B_n^i$ be the image of $B_n$ in $\Gamma(X_i,(\mathcal L|X_i)^{ne})$. Then $B^i:=\bigoplus_{n\ge 0}B_n^i$ is a graded subalgebra of $\bigoplus_{n\ge 0}\Gamma(X_i,(\mathcal L|X_i)^{ne})$, so it is a domain.
Let $P_i$ be the prime ideal which is the kernel of the surjection $B\rightarrow B^i$. The natural graded homomorphism $B\rightarrow \bigoplus_{i=1}^sB^i$ is 1-1, so $\cap P_i=0$. Since every prime ideal of $B$ must contain one of the $P_i$, we have that
$$
\mbox{Krull Dimension}(B)=\max_i\{\mbox{Krull Dimension}(B^i)\}.
$$
Thus, since the $X_i$ are reduced irreducible,
$$
\mbox{Krull Dimension}(B)\le\max_i\{a(X_i)+1\}
$$
and $\kappa(L)\le a(X)$.

We now prove 2). If $X$ is normal and irreducible, the desired bound is proven in Theorem 8.1 \cite{U}. Let $X_i'$ be the normalizations of the irreducible components $X_i$ of $X$. Let $X'$ be the disjoint union of the $X_i'$, and $\pi:X'\rightarrow X$ be the natural normalization map. Tensoring the inclusion
$$
0\rightarrow \mathcal O_X\rightarrow \pi_*\mathcal O_{X'}
$$
with $\mathcal L^n$, we obtain inclusions
$$
0\rightarrow \Gamma(X,\mathcal L^n)\rightarrow \Gamma(X',\pi^*\mathcal L^n)\cong \bigoplus_{i=1}^s\Gamma(X_i',(\pi^*\mathcal L^n|X_i')).
$$
The desired bound now follows from the equalities $a(X_i')=a(X_i)$, and Lemma 5.5 and the fact that the desired bound holds on normal varieties.

The proofs of 3) and 4) follows from the proofs of 3) and 4) in \cite{C2}.
\end{proof}

\section{Limits of graded linear series on compact  complex analytic varieties}\label{SecLim}

Suppose that $L$ is a graded linear series on a compact irreducible reduced complex space $X$.
We will call such a space a (compact) complex analytic variety.

The {\it index} $m=m(L)$ of $L$ is defined as the index of groups
$$
m=[\ZZ:G]
$$
where $G$ is the subgroup of $\ZZ$ generated by $\{n\mid L_n\ne 0\}$.

The following theorem (and the following Corollary \ref{Theorem12*}) have been proven for proper algebraic varieties in a series of papers. In this algebraic setting, it has been proven 
by  Okounkov  \cite{Ok} for section rings of ample line bundles,  Lazarsfeld and Mustata \cite{LM} for section rings of big line bundles, and for graded linear series by Kaveh and Khovanskii \cite{KK}. All of these proofs require the assumption that the ground field  is  algebraically closed. The theorem has been proven by the author over a perfect field in \cite{C2}, and over an arbitrary field in \cite{C4}.

In the analytic case, it has been proven by Bouksom \cite{B} when $X$ is a compact K\"ahler manifold and $\mathcal L$ is nef and big.

\begin{Theorem}\label{Theorem5}  Suppose that $X$ is a $d$-dimensional compact complex analytic variety, and $L$ is a graded linear series on $X$ with Kodaira-Iitaka dimension  $\kappa=\kappa(L)\ge 0$. Let $m=m(L)$ be the index of $L$.  Then  
$$
\lim_{n\rightarrow \infty}\frac{\dim_{\CC} L_{nm}}{n^{\kappa}}
$$
exists. 
\end{Theorem}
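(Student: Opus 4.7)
The plan is to model the proof on the algebraic arguments of \cite{KK} and \cite{C4}: construct an Okounkov-type semigroup $S\subset\ZZ^d\times\NN$ with $\#S_n=\dim_\CC L_n$, $m(S)=m$, and $q(S)=\kappa$, and then invoke the Kaveh--Khovanskii Theorem~\ref{ConeTheorem3}. The analytic input beyond Section~\ref{SecCone} is Lemma~\ref{LemmaKI}, which supplies the growth bounds on $\dim_\CC L_n$.

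Because $X$ is irreducible and reduced, its smooth locus is a nonempty open complex manifold on which $\mathcal L$ is locally trivial. I would fix a smooth point $p\in X$, a trivializing coordinate neighborhood $U\ni p$ with holomorphic coordinates $z_1,\dots,z_d$ centered at $p$, and a trivialization $\tau$ of $\mathcal L|_U$. Via $\tau^{\otimes n}$ every $f\in L_n$ determines a germ $\tilde f\in\mathcal O_{X,p}=\CC\{z_1,\dots,z_d\}$, and $f\mapsto\tilde f$ is injective since $X$ is irreducible. Let $\nu$ be the rank-$d$ lexicographic valuation on $\CC\{z_1,\dots,z_d\}\setminus\{0\}$, and set
$$
S=\{0\}\cup\bigcup_{n\ge 1}\bigl\{(\nu(\tilde f),n):0\ne f\in L_n\bigr\}\subset\ZZ^d\times\NN.
$$
The standard leading-exponent reduction of bases of the $L_n$ gives $\#S_n=\dim_\CC L_n$, so $m(S)=m(L)=m$. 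Lemma~\ref{LemmaKI}(2) provides the polynomial bound $\#S_{nm}\le\gamma(nm)^{a(X)}$, and Theorem~\ref{ConeTheorem4} then makes $S$ strongly nonnegative with $q(S)\le a(X)\le d$.

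The main task is to identify $q(S)$ with $\kappa$. The inequality $q(S)\ge\kappa$ follows from Lemma~\ref{LemmaKI}(3) combined with Theorem~\ref{ConeTheorem3}: if $q(S)<\kappa$ the limit $\dim_\CC L_{mk}/k^{q(S)}$ would be finite while the lower bound $\dim_\CC L_{en}>\alpha n^\kappa$ would force it to be unbounded. For $q(S)\le\kappa$, I would pick $r=q(S)+1$ elements $(\a_1,n_1),\dots,(\a_r,n_r)\in S$ which are $\QQ$-linearly independent in $\ZZ^{d+1}$ (possible since $G(S)$ has rank $q(S)+1$ and is $\QQ$-spanned by $S$), lift them to sections $f_j\in L_{n_j}$ with $\nu(\tilde f_j)=\a_j$, and show that $f_1,\dots,f_r$ are algebraically independent over $\CC$; this forces $r\le\sigma(L)=\kappa+1$, hence $q(S)\le\kappa$. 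Algebraic independence is the standard valuation principle: a hypothetical weighted-homogeneous polynomial relation $P(f_1,\dots,f_r)=0$ in $L$ transfers to $P(\tilde f_1,\dots,\tilde f_r)=0$ in $\CC\{z\}$, but the $\QQ$-linear independence of the $(\a_j,n_j)$ guarantees that distinct weighted-monomials of $P$ have distinct $\nu$-values (two distinct monomials of equal weighted degree and equal $\nu$-value would give a nontrivial $\ZZ$-linear relation among the $(\a_j,n_j)$), so no cancellation can occur and the monomial of minimal $\nu$-value must vanish in $P$, contradicting $P\ne 0$. This is the step I expect to be the main obstacle: it requires tracking that multiplication in $L$ corresponds to multiplication of germs under the trivializations $\tau^{\otimes n}$ (compatibility of $\tau^{\otimes n}\otimes\tau^{\otimes m}=\tau^{\otimes(n+m)}$) and then applying the valuation principle carefully in the weighted-homogeneous setting. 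Once $q(S)=\kappa$ is established, Theorem~\ref{ConeTheorem3} yields
$$
\lim_{n\to\infty}\frac{\dim_\CC L_{nm}}{n^\kappa}=\frac{{\rm vol}_{q(S)}\Delta(S)}{{\rm ind}(S)},
$$
completing the proof.
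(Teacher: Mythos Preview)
Your argument is correct. The construction of the semigroup $S$ via a lexicographic valuation at a smooth point, the equality $\#S_n=\dim_\CC L_n$, and the application of Lemma~\ref{LemmaKI}(2) with Theorem~\ref{ConeTheorem4} to obtain strong nonnegativity all match the paper exactly. The point where you diverge is the identification $q(S)=\kappa$.

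The paper does not argue this directly. Instead it passes to the finitely generated subalgebras $L^{[pm]}=\CC[L_{pm}]$: for these, the Hilbert polynomial forces $q(S(L^{[pm]}))=\dim Y_{pm}=\kappa(L^{[pm]})$, and then one checks separately that $q(S(L^{[pm]}))=q(S)$ and $\kappa(L^{[pm]})=\kappa(L)$ for $p\gg 0$ by comparing cones and by the definition of $\sigma(L)$. Your route is more self-contained: the inequality $q(S)\ge\kappa$ from the lower growth bound of Lemma~\ref{LemmaKI}(3), and $q(S)\le\kappa$ by lifting $q(S)+1$ $\QQ$-independent lattice points of $S$ to homogeneous elements of $L$ and using the valuation to prove their algebraic independence. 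Both steps are sound; in particular your observation that two distinct weighted-degree-$N$ monomials with equal $\nu$-value would yield a nontrivial $\ZZ$-relation among the $(\alpha_j,n_j)$ is exactly what makes the minimal-value term survive. The paper's detour through $L^{[pm]}$ buys something extra: it simultaneously sets up the proof of Theorem~\ref{Theorem100} on $\deg(Y_{pm})$, which your approach does not address. For Theorem~\ref{Theorem5} alone, your argument is cleaner.
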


In particular, from the definition of the index, we have that the limit
$$
\lim_{n\rightarrow \infty}\frac{\dim_{\CC} L_{n}}{{n}^{\kappa}}
$$
exists, whenever $n$ is constrained to lie in an arithmetic sequence $a+bm$ ($m=m(L)$ and $a$ an arbitrary but fixed constant), as $\dim_kL_n=0$ if $m\not\,\mid n$.

An example of a big line bundle where the limit in Theorem \ref{Theorem5} is an irrational number is given in Example 4 of Section 7 \cite{CS}.

The following theorem is proven by Kaveh and Khovanskii \cite{KK} when $X$ is a projective variety over  an algebraically closed field (Theorem 3.3 \cite{KK}). We prove the theorem for a proper algebraic variety over an arbitrary field in Theorem 7.2 \cite{C4}.

\begin{Theorem}\label{Theorem100} Suppose that $X$ is a $d$-dimensional compact complex analytic variety and $L$ is a graded linear series on $X$ with Kodaira-Iitaka dimension $\kappa=\kappa(L)\ge 0$. Let $m=m(L)$ be the index of $L$. 
Let 
$$
Y_{nm}=\mbox{Proj}(\CC[L_{nm}t])\subset \PP_{\CC}^{\dim_{\CC}L_{nm}-1}
$$
be the projective subvariety of $\PP^{\dim_{\CC}L_{nm}-1}$, where $t$ is an indeterminate and $\CC[L_{nm}t]$ is the graded $\CC$-algebra where $t$ has degree 1.
 Let $\deg(Y_{nm})$ be the degree of $Y_{nm}$ in $\PP^{\dim_{\CC}L_{nm}-1}$.
Then  $\dim Y_{nm}=\kappa$ for $n\gg 0$ and 
$$
\lim_{n\rightarrow \infty}\frac{\dim_{\CC} L_{nm}}{n^{\kappa}}=\lim_{n\rightarrow\infty}\frac{\deg(Y_{nm})}{\kappa!n^{\kappa}}.
$$
\end{Theorem}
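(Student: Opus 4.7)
The plan is to reduce to the semigroup machinery of Section \ref{SecCone} by constructing an Okounkov semigroup $S\subset \ZZ^d\times \NN$ for $L$, expressing $\deg(Y_{nm})$ as a limit of sumset cardinalities in $S$, and then squeezing between Theorems \ref{ConeTheorem3} and \ref{ConeTheorem5}.

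First I would build $S$. Pick a smooth point $x_0\in X$ (which exists since $X$ is a reduced irreducible analytic variety of dimension $d$), choose local analytic coordinates $z_1,\ldots,z_d$ at $x_0$, and let $\nu\colon \CC(X)^{*}\to \ZZ^d$ be the valuation given by the lexicographic leading exponent of the power-series expansion at $x_0$. Fix any nonzero $s_0\in L_m$ and set
$$
S \;=\; \{0\}\,\cup\,\bigcup_{n\ge 1}\{\,(\nu(f/s_0^n),\,nm)\,:\,f\in L_{nm}\setminus\{0\}\,\}\;\subset\;\ZZ^d\times \NN.
$$
By the standard Okounkov injectivity, $\#\,S_{nm}=\dim_{\CC}L_{nm}$. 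Lemma \ref{LemmaKI}(2) gives polynomial growth of $\#\,S_{nm}$, so by Theorem \ref{ConeTheorem4} $S$ is strongly nonnegative; the transcendence-degree argument of \cite{KK} (transcribed as in \cite{C4}) identifies $q(S)=\kappa$, and from the definition of the index $m=m(L)$ we have $m(S)=m$. Theorem \ref{ConeTheorem3} combined with Theorem \ref{Theorem5} then gives
$$
V\;:=\;\lim_{n\to\infty}\frac{\dim_{\CC}L_{nm}}{n^{\kappa}}\;=\;\frac{{\rm vol}_{\kappa}\Delta(S)}{{\rm ind}(S)}.
$$

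Next I would identify the degree side combinatorially. The $k$-th graded piece of $R^{(n)}=\CC[L_{nm}t]$ is the image $(L_{nm})^k$ of $\mathrm{Sym}^k L_{nm}$ in $L_{knm}$, and multiplicativity of $\nu$ gives $\dim_{\CC}(L_{nm})^k=\#(k*S_{nm})$. For $n$ large enough, $\sigma(\CC[L_{nm}t])=\sigma(L)=\kappa+1$, so the Krull dimension of $R^{(n)}$ is $\kappa+1$ and $\dim Y_{nm}=\kappa$; hence
$$
\lim_{k\to\infty}\frac{\dim_{\CC}(L_{nm})^k}{k^{\kappa}}\;=\;\frac{\deg(Y_{nm})}{\kappa!}.
$$
The inclusion $(L_{nm})^k\subseteq L_{knm}$ yields, after letting $k\to\infty$,
$$
\frac{\deg(Y_{nm})}{\kappa!\,n^{\kappa}}\;\le\;\lim_{k\to\infty}\frac{\dim_{\CC}L_{knm}}{(kn)^{\kappa}}\;=\;V,
$$
so $\limsup_n \deg(Y_{nm})/(\kappa!\,n^{\kappa})\le V$. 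Conversely, Theorem \ref{ConeTheorem5} applied to $S$ says that for every $\epsilon>0$ there is $p_0$ such that for all $p\ge p_0$,
$$
\frac{\deg(Y_{pm})}{\kappa!\,p^{\kappa}}\;=\;\lim_{k\to\infty}\frac{\#(k*S_{pm})}{(kp)^{\kappa}}\;\ge\;V-\epsilon,
$$
so $\liminf_n \deg(Y_{nm})/(\kappa!\,n^{\kappa})\ge V$. The two inequalities force the limit to exist and equal $V$, which is what the theorem asserts.

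The step I expect to be the main obstacle is the first one: producing a rank-$d$ valuation on $\CC(X)$ in the analytic category (at a genuine smooth point, with good compatibility with a fixed section of $\mathcal{L}$), and rigorously checking that the resulting semigroup $S$ satisfies $q(S)=\kappa$ rather than merely $q(S)\le d$. Both points are routine in the algebraic setting used in \cite{KK} and \cite{C4}, but here one must argue analytically-locally and invoke Lemma \ref{LemmaKI} together with the inequality $\kappa\le a(X)\le d$ to pin down $q(S)$. Once $S$ is in hand, the remainder is a direct transcription of the argument of Theorem 3.3 of \cite{KK} (or Theorem 7.2 of \cite{C4}).
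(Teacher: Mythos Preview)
Your approach is the paper's: build the Okounkov semigroup at a smooth point, identify $q(S)=\kappa$ (the paper carries this out explicitly by passing to the finitely generated subalgebras $L^{[pm]}=\CC[L_{pm}]$, comparing their Hilbert polynomials with Theorem~\ref{ConeTheorem3}, and invoking graded Noether normalization---this is precisely the ``transcendence-degree argument'' you defer to), bound $\deg(Y_{nm})/(\kappa!\,n^{\kappa})$ above via $(L_{nm})^k\subseteq L_{knm}$, and bound it below via Theorem~\ref{ConeTheorem5}.

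Two small inaccuracies are worth fixing. First, $L_m$ may well be zero even when $m=m(L)$ (take $L_n\ne 0$ exactly for $n\in\{0,2,3,4,\ldots\}$: then $m=1$ but $L_1=0$), so ``fix any nonzero $s_0\in L_m$'' can be vacuous. The paper sidesteps this by choosing an $\mathcal O_{X,Q}$-module isomorphism $\mathcal L_Q\cong R$ and thereby mapping every $L_n$ directly into $R$; no global section is singled out. Second, the asserted equality $\dim_{\CC}(L_{nm})^k=\#(k*S_{nm})$ is false in general: one only has $\ge$, because products with equal value can cancel to produce values outside $k*S_{nm}$. For instance in $\CC[[t]]$ with $V=\mbox{span}(t,\,t^2+t^5,\,t^3)$ one has $S=\{1,2,3\}$, yet $(t^2+t^5)^2-t\cdot t^3=2t^7+t^{10}\in V^2$ has value $7\notin 2*S$, and indeed $\dim V^2=6>5=\#(2*S)$. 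This does not damage your argument, since for the lower bound you only need
\[
\frac{\deg(Y_{pm})}{\kappa!\,p^{\kappa}}\;=\;\lim_{k\to\infty}\frac{\dim_{\CC}(L_{pm})^k}{(kp)^{\kappa}}\;\ge\;\lim_{k\to\infty}\frac{\#(k*S_{pm})}{(kp)^{\kappa}}\;\ge\;V-\epsilon,
\]
and this is exactly how the paper phrases it (as the chain of inequalities $\#(n*S(L)_{mp})\le \#(S(L^{[mp]})_{nmp})\le \#(S(L)_{nmp})$).
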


Here $\deg(Y_{nm})$ is the multiplicity of the  graded $\CC$-algebra $\CC[L_{nm}t]$. 

We now proceed to prove Theorems \ref{Theorem5} and \ref{Theorem100}.

Let $Q$ be a nonsingular point of  $X$. 
Let  $R=\mathcal O_{X,Q}$. $R$ is a $d$-dimensional regular local ring. 

Choose a regular system of parameters $y_1,\ldots, y_d$ in $R$. By a similar  argument to that of the proof of Theorem 4.1 \cite{C2}
(or Theorem 7.1 \cite{C4}), we may
define a valuation $\nu$ of the quotient field $Q(R)$ of $R$ dominating $R$, by stipulating that
 \begin{equation}\label{eq20}
\nu(y_i)= e_i\mbox{ for $1\le i\le d$}
\end{equation}
where $\{e_i\}$ is the standard basis of the totally ordered  group $\Gamma_{\nu}=(\ZZ^d)_{\rm lex}$, and
 $\nu(c)=0$ if $c$ is a unit in $R$. As  in the proof of Theorem 4.1 \cite{C2}, we have that the residue field of the valuation ring $V_{\nu}$ of $\nu$ is  $V_{\nu}/m_{\nu}=\CC$ (this follows most directly since $\nu$ is zero dimensional and $\CC$ is algebraically closed).

$L$ is a graded linear series for some line bundle $\mathcal L$ on $X$.  
There exists an $R=\mathcal O_{X,Q}$-module isomorphism $\sigma:\mathcal L_Q\rightarrow R$. We thus have an isomorphism of graded $R$-algebras
$$
\bigoplus_{n\ge 0} \mathcal L^n_Q\stackrel{\cong}{\rightarrow} R[t]
$$
to a (standard graded) polynomial ring over $R$. The restriction maps $\Gamma(X,\mathcal L^n)\rightarrow \mathcal L^n_Q$
are 1-1 since $X$ is irreducible and reduced, so we have a 1-1 graded $\CC$-algebra homomorphism
$$
\bigoplus_{n\ge 0}\Gamma(X,\mathcal L^n)\rightarrow R[t].
$$

We have an induced $\CC$-algebra homomorphism 
$$
 L\rightarrow R\subset V_{\nu}
 $$
 defined by mapping $t$ to 1.

Given a nonnegative element $\gamma$ in the  value group $\Gamma_{\nu}=(\ZZ^d)_{\rm lex}$ of $\nu$, we have associated valuation ideals $I_{\gamma}$ and $I_{\gamma}^+$ in $V_{\nu}$ defined by 
$$
I_{\gamma}=\{f\in V_{\nu}\mid \nu(f)\ge \gamma\}
$$
and
$$
I_{\gamma}^+=\{f\in V_{\nu}\mid \nu(f)>\gamma\}.
$$
Since $V_{\nu}/m_{\nu}=\CC$,  we have   that 
\begin{equation}\label{eqC1}
I_{\lambda}/I_{\lambda}^+\cong \CC
\end{equation}
for all nonnegative elements $\lambda\in \Gamma_{\nu}$. 
  Let
\begin{equation}\label{eqR1}
S(L)_n=\{\gamma\in \Gamma_{\nu}\mid \mbox{ there exists $f\in L_n$ such that $\nu(f)=\gamma$}\}.
\end{equation}
By (\ref{eqC1}), we have that
\begin{equation}\label{eqR3}
\dim_{\CC} L_n\cap I_{\gamma}/L_n\cap I_{\gamma}^+=\left\{
\begin{array}{ll} 1&\mbox{ if there exists $f\in L_n$ with $\nu(f)=\gamma$}\\
0&\mbox{ otherwise.}
\end{array}
\right.
\end{equation}

Since every element of $L_n$ has non negative value (as $L_n\subset V_{\nu}$), we have by (\ref{eqR1}) and  (\ref{eqR3}) that 
\begin{equation}\label{eqR2}
\dim_kL_n=\#S(L)_n
\end{equation}
for all $n$.
Let 
$$
S(L)=\{(\gamma,n)|\gamma \in S(L)_n\}.
$$
$S(L)$ is a subsemigroup of $\ZZ^{d+1}$.

We have that $m=m(L)=m(S(L))$. Let $q(L)=q(S(L))$.

By (\ref{eqKI4}) of Lemma \ref{LemmaKI} and (\ref{eqC2}), there exists a positive constant $\gamma$ such that 
$$
\dim_{\CC}L_n\le \gamma n^d
$$
for all $n$. By (\ref{eqR2}) and \ref{ConeTheorem4}, we have that $S(L)$ is a strongly nonnegative semigroup.

by Theorem \ref{ConeTheorem3} and (\ref{eqR2}), we have that
\begin{equation}\label{eqnr80}
\lim_{n\rightarrow \infty}\frac{\dim_{\CC}L_{nm}}{n^{q(L)}}=\lim_{n\rightarrow \infty}\frac{\#(S(L)_{nm})}{n^{q(L)}}
=\frac{{\rm vol}_{q(L)}(\Delta(S(L)))}{{\rm ind}(S(L))}
\end{equation}
exists.

Let $Y_{pm}$ be the varieties defined in the statement of Theorem \ref{Theorem100}. Let $d(pm)=\dim Y_{pm}$. The coordinate ring of $Y_{pm}$ is the $\CC$-subalgebra $L^{[pm]}:= \CC[L_{pm}]$ of $L$ (but with the grading giving elements of $L_{pm}$ degree 1). The Hilbert polynomial $P_{Y_{pm}}(n)$ of $Y_{pm}$ (Section I.7 \cite{H} or Theorem 4.1.3 \cite{BH}) has the properties that
\begin{equation}\label{eqred90}
P_{Y_{pm}}(n)=\frac{\deg(Y_{pm})}{d(pm)!}n^{d(pm)}+\mbox{lower order terms}
\end{equation}
and
\begin{equation}\label{eqred61}
\dim_{\CC}L^{[pm]}_{npm}=P_{Y_{pm}}(n)
\end{equation}
for $n\gg 0$. We have that
\begin{equation}\label{eqnr81}
\lim_{n\rightarrow \infty}\frac{\dim_{\CC}(L^{[pm]})_{npm}}{n^{d(pm)}}=\frac{\deg(Y_{pm})}{d(pm)!}.
\end{equation}

 For $p$ sufficiently large, we have that $m(S(L^{[pm]}))=mp$. Let $\mathcal C$ be the closed cone generated by $S(L)_{pm}$ in $\RR^{d+1}$. We also have that 
$$
\dim (\mathcal C\cap (\RR^d\times\{1\}))=\dim(\Delta(S(L))=q(L)
$$
 for $p$ sufficiently large. Since
$S(L)_{pm}=S(L^{[pm]})_{pm}$, we have that 
$$
\dim (\mathcal C\cap (\RR^d\times\{1\}))\le \dim(\Delta(S(L^{[pm]}))\le \dim (\Delta(L)).
$$
Thus 
\begin{equation}\label{eqred30}
q(S(L^{[pm]}))=q(L)
\end{equation}
for all $p$ sufficiently large.

By the definition of Kodaira-Iitaka dimension, we also have that
\begin{equation}\label{eqred31} 
\kappa(L^{[pm]})=\kappa(L)
\end{equation}
for $p$ sufficiently large.

Now by graded Noether normalization (Section I.7 \cite{H} or Theorem 1.5.17\cite{BH}), the finitely generated  $\CC$-algebra $L^{[pm]}$ satisfies
\begin{equation}\label{eqred63}
d(pm)=\dim Y_{pm}=\mbox{Krull dimension}(L^{[pm]})-1=\kappa(L^{[pm]}).
\end{equation}

 We have that
\begin{equation}\label{eqred62}
\dim_{\CC} L^{[pm]}_{npm}= \#(S(L^{[pm]})_{npm})
\end{equation}
for all  $n$. $S(L^{[pm]})$ is strongly nonnegative since $S(L^{[pm]})\subset S(L)$ (or since $L^{[pm]}$ is a finitely generated $\CC$-algebra).
It follows from Theorem \ref{ConeTheorem3}, (\ref{eqred62}), (\ref{eqred61}), (\ref{eqred90}) and (\ref{eqred63})  that 
\begin{equation}\label{eqred91}
q(S(L^{[pm]}))=d(pm)=\kappa(L^{[pm]}).
\end{equation}
From (\ref{eqred30}), (\ref{eqred91}) and (\ref{eqred31}), we have that 
\begin{equation}\label{eqnr85}
q(L)=\kappa(L)=\kappa.
\end{equation}

Theorem \ref{Theorem5} now follows from (\ref{eqnr80}) and (\ref{eqnr85}).
We now prove Theorem \ref{Theorem100}.  For all $p$, we have inequalities
$$
\#(n*S(L)_{mp})\le \#(S(L^{[mp]})_{nmp})\le \#(S(L)_{nmp}).
$$
The second term in the inequality is $\dim_{\CC}(L^{[pm]})_{nmp}$ and the third term is $\dim_{\CC}L_{nmp}$.
Dividing by $n^{\kappa}p^{\kappa}$, and taking the limit as $n\rightarrow \infty$, we obtain from 
Theorem \ref{ConeTheorem4}, (\ref{eqnr85}) and (\ref{eqnr80}) for the first term and (\ref{eqnr81}), (\ref{eqred31}) and (\ref{eqred63}) for the second term,
 that
for given $\epsilon >0$, we can take $p$ sufficiently large that
$$
\lim_{n\rightarrow\infty}\frac{\dim_{\CC}L_{nm}}{n^{\kappa}}-\epsilon\le \frac{\deg(Y_{pm})}{\kappa!p^{\kappa}}\le 
\lim_{n\rightarrow\infty}\frac{\dim_{\CC}L_{nm}}{n^{\kappa}}.
$$
Taking the limit as $p$ goes to infinity then proves Theorem \ref{Theorem100}.

\begin{Corollary}\label{Theorem12*} Suppose that $X$ is a compact complex analytic variety of dimension $d$ and $\mathcal L$ is a  line bundle on $X$.  Then  the limit
$$
\lim_{n\rightarrow \infty}\frac{\dim_{\CC} \Gamma(X,\mathcal L^n)}{n^d}
$$
exists. 
\end{Corollary}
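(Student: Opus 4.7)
The plan is to apply Theorem \ref{Theorem5} directly to the full section ring $L = \bigoplus_{n \ge 0} \Gamma(X, \mathcal L^n)$, viewed as a graded linear series for $\mathcal L$ on $X$.

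If $\Gamma(X, \mathcal L^n) = 0$ for every $n > 0$, then $\kappa(L) = -\infty$ by Lemma \ref{LemmaKI}(4) and the limit is trivially $0$. Otherwise $\kappa = \kappa(L) \ge 0$, and Lemma \ref{LemmaKI}(1) combined with inequality (\ref{eqC2}) gives $\kappa \le a(X) \le d$. Writing $m = m(L)$, Theorem \ref{Theorem5} yields a positive limit
$$
\ell = \lim_{n \to \infty} \frac{\dim_{\CC} L_{nm}}{n^{\kappa}}.
$$

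If $\kappa < d$, the conclusion is immediate. For $n$ not divisible by $m$ we have $\dim L_n = 0$, while for $n = km$ the identity
$$
\frac{\dim L_{km}}{(km)^d} = \frac{\dim L_{km}}{k^{\kappa}} \cdot \frac{1}{m^d \, k^{d-\kappa}}
$$
shows the ratio tends to $0$. Both subsequences of $\dim L_n/n^d$ therefore converge to $0$, and the limit exists and equals $0$.

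The remaining case is $\kappa = d$, and here the corollary reduces to the assertion $m(L) = 1$: given this, Theorem \ref{Theorem5} delivers precisely $\lim_n \dim L_n/n^d = \ell$. The chain $\kappa \le a(X) \le d$ forces $a(X) = d$, so $X$ is Moishezon; passing to a smooth projective bimeromorphic modification $\pi : \tilde X \to X$ identifies the section rings of $\mathcal L$ and $\pi^*\mathcal L$, and the algebraic Kodaira Lemma applied to the big line bundle $\pi^*\mathcal L$ on $\tilde X$ gives $\Gamma(\tilde X, (\pi^*\mathcal L)^n) \ne 0$ for all $n \gg 0$, whence $m = 1$. This last step is the main obstacle, since Theorem \ref{Theorem5} alone cannot exclude the possibility that nonzero sections of $\mathcal L^n$ occur only in a proper arithmetic progression, and so some input beyond the convergence theorem for graded linear series -- Moishezon theory or an analytic version of Kodaira's lemma -- is unavoidable in the big case.
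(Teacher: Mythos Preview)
Your approach matches the paper's: apply Theorem~\ref{Theorem5} to the full section ring, handle $\kappa<d$ by observing the limit is zero, and in the case $\kappa=d$ show $m(L)=1$ by passing via Moishezon's theorem to a smooth projective model and invoking Kodaira's lemma (the paper cites Lemma~2.2 of \cite{LM}) for the big pullback $\pi^*\mathcal L$.

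There is one genuine inaccuracy in your final step. The claim that $\pi$ ``identifies the section rings of $\mathcal L$ and $\pi^*\mathcal L$'' is false in general: in this paper a complex analytic variety is irreducible and reduced but need not be normal, and for non-normal $X$ one has $\pi_*\mathcal O_{\tilde X}\supsetneq\mathcal O_X$, so the pullback $\Gamma(X,\mathcal L^n)\hookrightarrow\Gamma(\tilde X,\pi^*\mathcal L^n)$ is only an injection. That injection still shows $\pi^*\mathcal L$ is big, and Kodaira's lemma gives $\Gamma(\tilde X,\pi^*\mathcal L^n)\ne 0$ for all $n\gg 0$, but this alone does not force $\Gamma(X,\mathcal L^n)\ne 0$. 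The paper closes the gap quantitatively: from $0\to\mathcal O_X\to\pi_*\mathcal O_{\tilde X}\to\mathcal F\to 0$ with $\mathcal F$ supported in dimension $<d$, tensoring with $\mathcal L^n$ yields $\dim_{\CC}\Gamma(\tilde X,\pi^*\mathcal L^n)-\dim_{\CC}\Gamma(X,\mathcal L^n)<\lambda n^{d-1}$; since bigness on the projective $\tilde X$ gives $\dim_{\CC}\Gamma(\tilde X,\pi^*\mathcal L^n)>cn^d$ for large $n$, it follows that $\dim_{\CC}\Gamma(X,\mathcal L^n)>0$ for all large $n$, whence $m(L)=1$.
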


\begin{proof} Let $L=\bigoplus_{n\ge 0}\Gamma(X,\mathcal L^n)$. In the case when $\kappa(L)<d$ the corollary is immediate from
Theorem \ref{Theorem5} (the limit is zero). Suppose that $\kappa(L)=d$. We must show that $m(L)=1$, and then the limit follows from Theorem \ref{Theorem5}. 

Since $\kappa(L)=d$, there exists $\tau>0$ and $e>0$ such that $\dim_{\CC}\Gamma(X,\mathcal L^{ne})>\tau n^d$ for all $n$ sufficiently large.
By a theorem of Moishezon, there exist a proper modification $\pi:X'\rightarrow X$ such that $X'$ is a nonsingular projective variety
(Chapter II \cite{Mo}, Theorem 3.6 \cite{U}).
Let $\mathcal L'=\pi^*(\mathcal L)$. 

We have exact sequences
$$
0\rightarrow \mathcal O_X\rightarrow \pi_*\mathcal O_{X'}\rightarrow \mathcal F\rightarrow 0
$$
where $\mathcal F$ is a coherent $\mathcal O_X$ module whose support has dimension less than $d$. Tensoring with $\mathcal L^{ne}$ and taking global sections, we have exact sequences
\begin{equation}\label{eqC6}
0\rightarrow \Gamma(X,\mathcal L^{n})\rightarrow \Gamma(X,\pi^*(\mathcal L)^{n})\rightarrow \Gamma(X,\mathcal F\otimes \mathcal L^{n}).
\end{equation}
There exists a constant $\lambda$ such that 
\begin{equation}\label{eqC5}
\dim_{\CC}\Gamma(X,\mathcal F\otimes \mathcal L^{n})<\lambda n^{d-1}
\end{equation}
for all $n$, since the support of $\mathcal F$ has dimension $\le d-1$.
Thus we have a positive constant $\tau'$ such that
$\dim_{\CC}\Gamma(X',\pi^*(\mathcal L)^{ne})>\tau' n^d$ for all $n$ sufficiently large. Thus $\pi^*(\mathcal L)$ is a big line bundle on $X'$. Let $L'=\bigoplus_{n\ge 0}\Gamma(X,\pi^*\mathcal L^n)$. It follows from Lemma 2.2 \cite{LM} that $m(L')=1$, so that $m(L)=1$
by (\ref{eqC6}) and (\ref{eqC5}).
 \end{proof}

\section{Limits on compact reduced complex analytic spaces}\label{SecRed}

Suppose that $X$ is a compact complex analytic space and $L$ is a graded linear series for a line bundle $\mathcal L$ on $X$.
Suppose that $Y$ is a closed analytic subspace of $X$. Set $\mathcal L|Y=\mathcal L\otimes_{\mathcal O_X}\mathcal O_Y$. Taking global sections of the natural surjections
$$
\mathcal L^n\stackrel{\phi_n}{\rightarrow} (\mathcal L|Y)^n\rightarrow 0,
$$
for $n\ge 1$ we have  induced short exact sequences of $\CC$-vector spaces
\begin{equation}\label{eq54}
0\rightarrow K(L,Y)_n\rightarrow L_n\rightarrow (L|Y)_n\rightarrow 0,
\end{equation}
where 
$$
(L|Y)_n:=\phi_n(L_n)\subset \Gamma(Y,({\mathcal L}|Y)^n)
$$
 and $K(L ,Y)_n$ is the kernel of $\phi_n|L_n$. Defining $K(L,Y)_0=\CC$ and $(L|Y)_0=\phi_0(L_0)$, we have that
$L|Y=\bigoplus_{n\ge 0}(L|Y)_n$ is a graded linear series for $\mathcal L|Y$ and $K(L,Y)=\bigoplus_{n\ge 0}K(L,Y)_n$ is a graded linear series for $\mathcal L$.

The following lemma is proven in Lemma 5.1 \cite{C2} for proper algebraic varieties. The proof is the same for analytic spaces.

\begin{Lemma}\label{Lemma50a} Suppose that $X$ is a compact reduced complex analytic space  and $X_1,\ldots,X_s$ are the irreducible components of $X$. Suppose that $L$ is a graded linear series on $X$. Then 
$$
\kappa(L)=\max\{\kappa(L|X_i)\mid 1\le i\le s\}.
$$
\end{Lemma}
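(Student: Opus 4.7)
The plan is to reduce both inequalities to the single observation that since $X$ is reduced with irreducible components $X_1,\ldots,X_s$, the natural map $\mathcal O_X\hookrightarrow \bigoplus_{i=1}^s \mathcal O_{X_i}$ is injective. Tensoring with the locally free sheaf $\mathcal L^n$ preserves this, and taking global sections is left exact, so the graded $\CC$-algebra homomorphism
$$
\Phi\colon L \longrightarrow \bigoplus_{i=1}^s L|X_i,\qquad f \longmapsto (\phi_1(f),\ldots,\phi_s(f))
$$
is injective, where $\phi_i\colon L\twoheadrightarrow L|X_i$ is the restriction.

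For the inequality $\kappa(L)\ge \max_i\kappa(L|X_i)$, I would fix $i$ and a family $g_1,\ldots,g_m\in L|X_i$ of algebraically independent homogeneous elements of positive degree, then use the surjectivity of $\phi_i$ in each positive degree to lift them to homogeneous $f_1,\ldots,f_m\in L$ of the same degrees. Any nonzero polynomial relation on the $f_j$ would restrict via the ring homomorphism $\phi_i$ to a relation on the $g_j$, so the lifts are algebraically independent and $\sigma(L)\ge \sigma(L|X_i)$.

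For the reverse $\kappa(L)\le \max_i\kappa(L|X_i)$, the central trick is to multiply potential relations. Given algebraically independent homogeneous positive-degree elements $f_1,\ldots,f_m\in L$, suppose for contradiction that for every $i$ there exists a nonzero $P_i\in\CC[X_1,\ldots,X_m]$ with $P_i(\phi_i(f_1),\ldots,\phi_i(f_m))=0$. Then $P:=\prod_{i=1}^s P_i$ is nonzero (the polynomial ring is a domain) and it satisfies $P(\phi_i(f_1),\ldots,\phi_i(f_m))=0$ simultaneously for every $i$. Applying $\Phi$ and invoking its injectivity forces $P(f_1,\ldots,f_m)=0$ in $L$, contradicting algebraic independence. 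Hence there exists some index $i$ for which the images $\phi_i(f_1),\ldots,\phi_i(f_m)$ remain algebraically independent in $L|X_i$, giving $\sigma(L|X_i)\ge m=\sigma(L)$.

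The degenerate case is automatic: if every $(L|X_i)_n$ vanishes for $n>0$, then injectivity of $\Phi$ forces $L_n=0$ for $n>0$, so both sides equal $-\infty$. The only step that requires any real care is the injectivity of $\Phi$ in the analytic category, but this follows formally from reducedness together with the fact that tensoring an injection of $\mathcal O_X$-modules with a locally free sheaf preserves injectivity; everything else is elementary algebraic manipulation with polynomials.
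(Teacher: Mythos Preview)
Your argument is correct. The paper does not give an in-text proof of this lemma; it simply refers to Lemma~5.1 of \cite{C2} and remarks that the algebraic proof carries over unchanged to the analytic setting. Judging from the closely related argument the paper \emph{does} spell out (the reduced case in the proof of part~1) of Lemma~\ref{LemmaKI}), the intended route is to pass to a finitely generated graded subalgebra $B=\CC[L_et^e]$ with $\sigma(B)=\sigma(L)$, invoke the identification $\kappa(B)=\mbox{Krull dimension}(B)-1$, and then note that the kernels $P_i$ of the restrictions $B\to B^i\subset L|X_i$ are primes with $\bigcap_i P_i=0$, so that $\mbox{Krull dimension}(B)=\max_i\mbox{Krull dimension}(B^i)$.

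Your approach sidesteps both the reduction to a finitely generated subalgebra and the translation into Krull dimension: the product-of-relations trick $P=\prod_i P_i$ works directly at the level of algebraic independence, using only that $\Phi$ is an injective graded $\CC$-algebra homomorphism (which, as you note, follows from reducedness and local freeness of $\mathcal L^n$). This is more elementary and self-contained; the Krull-dimension route has the mild advantage that the key step becomes a standard commutative-algebra fact about minimal primes, at the cost of importing the identification of $\kappa$ with Krull dimension minus one.
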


The following theorem is proven in Theorem 5.2 \cite{C2} for reduced projective schemes over a perfect field and for reduced proper schemes over an arbitrary field in Theorem 8.2 \cite{C4}.

\begin{Theorem}\label{Theorem18} Suppose that $X$ is a compact reduced complex analytic space. 
Let $L$ be a graded linear series on $X$ with Kodaira-Iitaka dimension  $\kappa=\kappa(L)\ge 0$.  
 Then there exists a positive integer $r$ such that 
$$
\lim_{n\rightarrow \infty}\frac{\dim_{\CC} L_{a+nr}}{n^{\kappa}}
$$
exists for any fixed $a\in \NN$.
\end{Theorem}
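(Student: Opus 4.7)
The plan is to reduce to the irreducible case, already treated by Theorem~\ref{Theorem5}, by inducting on the number $s$ of irreducible components of $X$. The base case $s = 1$ is immediate: $X$ is a compact complex analytic variety, and Theorem~\ref{Theorem5} applied with $r = m(L)$ already gives existence of $\lim_{n\to\infty} \dim_\CC L_{a+nr}/n^\kappa$ for every $a \in \NN$ (either as a rescaling of the main limit of Theorem~\ref{Theorem5} when $r \mid a$, or trivially $0$ when $r \nmid a$ since then $\dim_\CC L_{a+nr} = 0$).

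For the inductive step, I would fix an irreducible component $X_1$, set $X' = X_2 \cup \cdots \cup X_s$ with its reduced analytic structure (a compact reduced complex analytic space with $s - 1$ irreducible components), and exploit the short exact sequence (\ref{eq54})
\begin{equation*}
0 \to K(L, X_1)_n \to L_n \to (L|X_1)_n \to 0.
\end{equation*}
For the right end, $L|X_1$ is a graded linear series on the irreducible variety $X_1$, so Theorem~\ref{Theorem5} supplies $r_1 = m(L|X_1)$ such that $\lim_{n\to\infty}\dim_\CC(L|X_1)_{a+nr_1}/n^{\kappa(L|X_1)}$ exists for every $a$. For the left end, I want to view $K(L, X_1)$ as a graded linear series on $X'$ for $\mathcal L|X'$: the restriction map $K(L, X_1)_n \to \Gamma(X', (\mathcal L|X')^n)$ is injective because reducedness of $X$ forces $\mathcal O_X \hookrightarrow \mathcal O_{X_1} \oplus \mathcal O_{X'}$, and multiplicativity is clear. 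The inductive hypothesis applied on $X'$ then produces $r'$ such that $\lim_{n\to\infty}\dim_\CC K(L, X_1)_{a+nr'}/n^{\kappa(K(L,X_1))}$ exists for every $a$.

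Setting $r$ to be a common multiple of $r_1$ and $r'$, both limits persist along the sparser progression $a + nr$ (up to a rescaling by a fixed power of $r/r_1$ or $r/r'$). Since $K(L, X_1) \subset L$ as graded $\CC$-subalgebras, $\kappa(K(L, X_1)) \le \kappa(L) = \kappa$, and Lemma~\ref{Lemma50a} gives $\kappa(L|X_1) \le \kappa$. Dimension additivity in the exact sequence then yields
\begin{equation*}
\frac{\dim_\CC L_{a+nr}}{n^\kappa} = \frac{\dim_\CC K(L,X_1)_{a+nr}}{n^\kappa} + \frac{\dim_\CC (L|X_1)_{a+nr}}{n^\kappa},
\end{equation*}
and each summand converges: if its associated Kodaira-Iitaka dimension equals $\kappa$, the rescaled subsequence limit applies directly; if it is strictly smaller (say $\kappa'$), the ratio is a convergent sequence times $n^{\kappa' - \kappa} \to 0$. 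This finishes the induction.

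The main obstacle I foresee is the identification of $K(L, X_1)$ as a graded linear series on $X'$; it is here that reducedness of $X$ is used in an essential way, and this is presumably the reason the theorem requires $X$ reduced (in parallel with the non-reduced counterexamples noted in \cite{C2,C4}). Beyond this identification, the argument is a mechanical combination of the inductive hypothesis, Theorem~\ref{Theorem5}, and elementary manipulation of subsequence limits.
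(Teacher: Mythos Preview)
Your proof is correct and takes essentially the same approach as the paper: filter $L_n$ via the kernel/image exact sequences coming from restriction to irreducible components, use reducedness to see the filtration terminates, and apply Theorem~\ref{Theorem5} to each irreducible piece. The only difference is packaging: the paper unrolls your induction in one step, writing $\dim_\CC L_n = \sum_{i=1}^s \dim_\CC (M^{i-1}|X_i)_n$ with $M^0 = L$ and $M^i = K(M^{i-1},X_i)$ (all kept as graded linear series on $X$), and then applies Theorem~\ref{Theorem5} to each $(M^{i-1}|X_i)$ on $X_i$ with $r = \mathrm{LCM}\{m(M^{i-1}|X_i)\}$---this sidesteps your passage to $X'$ but is otherwise the same argument.
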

The theorem says that 
$$
\lim_{n\rightarrow \infty}\frac{\dim_{\CC} L_{n}}{n^{\kappa}}
$$
exists if $n$ is constrained to lie in an arithmetic sequence $a+br$ with $r$ as above, and for some fixed $a$. The conclusions of the theorem are a little weaker than the conclusions of Theorem \ref{Theorem5} for  varieties. In particular, the index $m(L)$ has little relevance on reduced but not irreducible schemes (as shown by the example after Theorem 9.2 \cite{C4} and Example 5.5 \cite{C2}).

\begin{proof}  Let $X_1,\ldots,X_s$ be the irreducible components of $X$. 
 Define graded linear series $M^i$ on $X$
 by $M^0=L$, $M^i=K(M^{i-1},X_i)$ for $1\le i\le s$. 
 By (\ref{eq54}), for $n\ge 1$, we have exact sequences of $k$-vector spaces 
 $$
 0\rightarrow (M^{j+1})_n=K(M^j,X_{j+1})_n\rightarrow M_n^j\rightarrow (M^j|X_{j+1})_n\rightarrow 0
 $$
 for $0\le j\le s-1$, and thus
 $$
 M_n^j={\rm Kernel}(L_n\rightarrow \bigoplus_{i=1}^j(L|X_i)_n)
 $$
 for $1\le j\le s$. The natural map $L\rightarrow \bigoplus_{i=1}^sL|X_i$ is an injection of $\CC$-algebras since 
 $X$ is reduced. Thus $M_n^s=(0)$, and
 \begin{equation}\label{eq71}
 \dim_{\CC}L_n=\sum_{i=1}^{s}\dim_{\CC} (M^{i-1}|X_i)_n
 \end{equation}
 for all $n$.
  Let $r=\mbox{LCM}\{m(M^{i-1}|X_i)\mid  \kappa(M^{i-1}|X_i)=\kappa(L)\}$. 
 The theorem now follows from Theorem \ref{Theorem5} applied to each of the $X_i$ with $\kappa(M^{i-1}|X_i)=\kappa(L)$ (we can start with an $X_1$ with $\kappa(L|X_1)=\kappa(L)$).
\end{proof}

\end{document}